\newtheorem{theorem}{Theorem}
\newtheorem{lemma}[theorem]{Lemma}
\newtheorem{proposition}[theorem]{Proposition}
\newtheorem{corollary}[theorem]{Corollary}
\theoremstyle{definition}
\newtheorem{definition}[theorem]{Definition}
\newtheorem{example}[theorem]{Example}
\theoremstyle{remark}
\newtheorem{remark}[theorem]{Remark}
\numberwithin{equation}{section}
\numberwithin{theorem}{section}
\renewcommand{\comment}[1]{}
\newcommand{\KK}{\mathbbm{k}}
\renewcommand{\k}{\mathbbm{k}}
\def\G{\mathbb{G}}
\def\Z{\mathbb{Z}}
\def\Gbb{\mathbb{G}}
\def\Q{\mathbb{Q}}
\def\A{\mathcal{A}}
\def\B{\mathcal{B}}
\def\C{\mathbb{C}}
\def\D{\mathcal{D}}
\def\N{\mathbb{N}}
\def\O{\mathcal{O}}
\def\S{\mathcal{S}}
\def\ZZ{\mathbb{Z}}
\def\K{\mathbbm{k}}
\def\t{\mathbf{t}}
\def\bP{P}
\DeclareMathOperator\End{End}
\DeclareMathOperator\Hom{Hom}
\DeclareMathOperator\ad{ad}
\DeclareMathOperator\gr{gr}
\DeclareMathOperator\Tr{Tr}
\DeclareMathOperator\rank{rank}
\DeclareMathOperator\fun{Fun}
\DeclareMathOperator\Ker{Ker}
\DeclareMathOperator\Res{Res}
\DeclareMathOperator\ind{ind}
\DeclareMathOperator\reg{reg}
\DeclareMathOperator\Id{Id}
\def\vac{{\boldsymbol{1}}}  
\newcommand{\codim}{\operatorname{codim}}
\newcommand{\Spec}{\operatorname{Spec}}
\newcommand{\Lie}{\operatorname{Lie}}
\def\g{\mathfrak{g}}
\def\h{\mathfrak{h}}
\def\n{{\mathfrak{n}}}
\def\sl{\mathfrak{sl}}
\def\lieg{{\mathfrak{g}}}
\def\cc{\mathfrak{c}}
\def\lieh{\mathfrak{h}}
\def\v{\mathfrak{v}}
\def\hg{\widehat{\g}}
\def\z{\mathfrak{z}}
\def\hg{\widehat{\g}}
\def\gl{\mathfrak{gl}}
\def\Sch{\operatorname{Sch}}
\def\SL{\operatorname{SL}}
\def\SO{\operatorname{SO}}
\def\Sp{\operatorname{Sp}}
\def\into{\hookrightarrow}
\def\onto{\twoheadrightarrow}
\def\isoto{\overset{\sim}{\longrightarrow}}
\begin{document}

\title[]{The centre of the modular affine vertex algebra}
\author{Tomoyuki Arakawa, Lewis Topley and Juan J. Villarreal}

\address{Department of Mathematical Sciences,
University of Bath,
Bath, BA2 7AY,
United Kingdom}
\address{Research Institute for Mathematical Sciences, Kyoto University, Yoshidahonmachi, Sakyo Ward, Kyoto, 606-8501, Japan}
\email{arakawa@kurims.kyoto-u.ac.jp, lt803@bath.ac.uk, juanjos3villarreal@gmail.com}


\maketitle
\begin{center}
{\it Dedicated to Victor Kac on the occasion of his 80th birthday.}
\end{center}

\begin{abstract} 
The Feigin--Frenkel theorem states that, over the complex numbers, the centre of the universal affine vertex algebra at the critical level is an infinite rank polynomial algebra. The first author and W.~Wang observed that in positive characteristics, the universal affine vertex algebra contains a large central subalgebra known as the $p$-centre. They conjectured that at the critical level the centre should be generated by the Feigin--Frenkel centre and the $p$-centre. In this paper we prove the conjecture for classical simple Lie algebras for $p$ larger than the Coxeter number, and for exceptional Lie algebras in large characteristics. Finally, we give an example which shows that at non-critical level the center is larger than the $p$-centre.
\end{abstract}


\section{Introduction}

Let $\k$ be an algebraically closed field of positive characteristic $p$. Let $G$ be a connected, simply connected, simple algebraic group scheme such that $p$ larger than the Coxeter number. Under these assumption, $\g = \Lie G$ is a simple Lie algebra, and the normalised Killing form $\kappa$ is non-vanishing on $\g$.

Understanding the geometry of the centre $Z(\g)$ of the enveloping algebra $U(\g)$ is fundamental to describing the representation theory of $\g$. For example the maximal dimension of simple $\g$-modules is equal to the generic rank of $U(\g)$ over $Z(\g)$ \cite[Theorem~6]{Za}, and the smooth locus of $\Spec Z(\g)$ coincides with the Azumaya locus \cite{BG}. These discoveries underpinned the derived localisation theorem \cite{BMR} which relates the derived category of $\g$-modules (with fixed/generalised central characters) to the derived category of coherent sheaves on a Springer fibre. The starting point of such a study is describing $Z(\g)$ algebraically.

On one hand, there is the subalgebra $U(\g)^G \subseteq Z(\g)$ which is isomorphic to the algebra of Weyl group invariants $U(\h)^W$ under the twisted Harish--Chandra homomorphism. We call this the {\it Harish--Chandra centre}. On the other hand, there is a large central subalgebra $Z_p(\g)$ which is specific to the modular setting, known as the {\it $p$-centre}. This is best understood as a filtered lifting of the central subalgebra $\k[\g^*]^p$ of the Poisson algebra $\k[\g^*]$. The spectrum of $Z_p(\g)$ naturally identifies with the Frobenius twist $(\g^*)^{(1)}$. Veldkamp's theorem \cite{V} states that these two subalgebras generate the centre $Z(\g)$ and so
\begin{eqnarray}
\label{e:geometricveldkamp}
\Spec Z(\g) \isoto \h^*/W_\bullet \times_{(\h^*/W)^{(1)}} (\g^*)^{(1)}
\end{eqnarray}
where $W_\bullet$ denotes the $\rho$-shifted action (see \cite[3.1.6]{BMR} for more details). Our goal is to give a similar description of the centre of the universal affine vertex algebra.

The Kac--Moody affinisation $\hg$ of $\g = \Lie(G)$ is the non-split central extension of the loop Lie algebra $\g\otimes \k(\!(t)\!)$ afforded by $\kappa$.
The vacuum module $V^k(\g)$ at level $k$ is the generalised Verma module induced from a maximal parabolic subalgebra of $\hg$, and it carries the structure of a vertex algebra \cite{Bo1}. The level $k = -h^\vee$ is called the {\it critical level}. 
As usual, $h^\vee$ denotes the dual Coxeter number.

For the moment we briefly leave the modular setting to discuss some features of the representation theory of $V^k(\g)$ over the complex numbers. Here there is a well-known dichotomy between the critical and non-critical levels. 
The differences between these cases are quite stark.
The multiplicity of  simple highest weight modules in a Verma module is described  by the Kazhdan-Lusztig polynomials/the inverse Kazhdan-Lusztig polynomials associated with the affine Weyl groups at the non-critical levels \cite{KT},
while at the critical level the multiplicity of  simple hight weight modules in a restricted Verma module \cite{AF1}
is described  by the periodic Kazhdan-Lusztig polynomials \cite{Lus, AF2, FG}.
The primary feature which causes these differences is the nature of the centre $\z V^k(\g)$ \cite{FF}. The centre $\z V^k(\g)$ is trivial away from the critical level (a direct consequence of the conformal grading) whilst at the critical level $\z V^{-h^\vee}(\g)$ is an infinite rank polynomial ring, canonically identified with the algebra of regular functions on the moduli space of certain principle bundles on the formal disk, known as opers \cite{BD}. We remark that the non-negative part of the loop group acts on $V^{-h^\vee}(\g)$ and the centre in the complex setting is equal to $V^{-h^\vee}(\g)^{G[[t]]}$.

Returning to our field $\k$ of positive characteristic, 
the study of vertex algebra in  positive characteristic was initiated in \cite{BR}
and has been developed in e.g. \cite{DR, AW, LM, HJ}.
However,
the representation theory of $V^k(\g)$ is much less established compared to that over $\C$.


In \cite{AW} the first author and Weiqiang Wang introduced the theory of baby Wakimoto modules. One of the basic observations is that the restricted structure on $\hg$ leads to a large central subalgebra $\z_pV^k(\g) \subseteq V^k(\g)$, also dubbed the {\it $p$-centre}. This subalgebra is naturally isomorphic to the coordinate ring $\k[(J_\infty \g^*)^{(1)}]$ on the Frobenius twist of the arc space of $\g^*$.

The natural way to understand a non-commutative algebra is to first understand the semi-classical limit. The Poincar{\'e}--Birkoff--Witt (PBW) filtration of the enveloping algebra $U(\hg)$ gives rise to a filtration of the vertex algebra $V^{k}(\g)$ and the associated graded algebra is the classical affine vertex algebra \cite[\textsection 16.2]{FB}. In Section~\ref{ss:modularPVA} we explain the minor differences involved in the definition of a Poisson vertex algebra whilst working in positive characteristic.

The semiclassical limit $\gr V^k(\g)$ is $G[[t]]$-equivariantly identified with the coordinate ring $\k[J_\infty \g^*]$ of the arc space on $\g^*$. Our first main theorem describes the centre of this Poisson vertex algebra.

\begin{theorem}
\label{T:PVAcentreintro}
Let $G$ be a simply connected, connected simple group scheme over a field $\k$ of characteristic $p$ larger than the Coxeter number of $G$. Then the following holds
\begin{enumerate}
\setlength{\itemsep}{4pt}
\item $\k[J_\infty \g^*]^{G[[t]]}$ polynomial ring on infinitely many generators which we describe explicitly \eqref{eqp2}. The subalgebra $\k[J_\infty \g^*]^p$ of $p$th powers is also central.
\item The centre of $\k[J_\infty \g^*]$ is generated by $\k[J_\infty \g^*]^{G[[t]]}$ and $\k[J_\infty \g^*]^p$.
\item $\k[J_\infty \g^*]^{\g[[t]]}$ is a free $\k[J_\infty \g^*]^p$-module and the basis can be describe explicitly \eqref{e:jetsrestrictedbasis}.
\item $\k[J_\infty \g^*]^{\g[[t]]}$ is isomorphic to the tensor product of $\k[J_\infty \g^*]^p$ and $\k[J_\infty \g^*]^{G[[t]]}$ over the intersection.
\end{enumerate}
\end{theorem}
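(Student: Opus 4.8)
The plan is to study the $\g[[t]]$-invariants of the arc space coordinate ring through the interplay between the Feigin--Frenkel-type classical centre $\k[J_\infty\g^*]^{G[[t]]}$ and the $p$-centre $\k[J_\infty\g^*]^p$, mirroring the finite-dimensional picture behind Veldkamp's theorem \eqref{e:geometricveldkamp}. First I would recall the explicit description of $\k[J_\infty\g^*]^{G[[t]]}$ from part (1): it is freely generated by the coefficients $P_i^{(n)}$ ($i=1,\dots,\ell$, $n\ge 0$) obtained by applying the vertex-algebra translation operator $T = \partial_t$ to lifts $P_i$ of the fundamental invariants of $\k[\g^*]^G$, so $\k[J_\infty\g^*]^{G[[t]]} = \k[P_i^{(n)} : 1\le i\le\ell,\ n\ge 0]$. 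One knows that over $\k$, $\k[\g^*]^G$ itself is still a polynomial ring on $\ell$ generators when $p$ exceeds the Coxeter number, and that the Kostant-type regularity results (the differentials $dP_i$ are linearly independent on a big open set) remain valid in this range of $p$; these inputs are what make the jet-space arguments go through verbatim.

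The key step is part (4), from which (2) and the freeness in (3) follow formally. I would argue that the natural multiplication map
\begin{equation}
\label{e:tensormap}
\k[J_\infty\g^*]^p \otimes_{R_0} \k[J_\infty\g^*]^{G[[t]]} \longrightarrow \k[J_\infty\g^*]^{\g[[t]]},
\qquad R_0 := \k[J_\infty\g^*]^p \cap \k[J_\infty\g^*]^{G[[t]]},
\end{equation}
is an isomorphism. The intersection $R_0$ is itself easy to identify: it is the polynomial ring on the $p$th powers $(P_i^{(n)})^p$, equivalently the $G[[t]]$-invariants of $\k[J_\infty\g^*]^p \cong \k[(J_\infty\g^*)^{(1)}]$, which are the pullbacks of the Frobenius-twisted Feigin--Frenkel generators. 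For injectivity of \eqref{e:tensormap} I would use that $\k[J_\infty\g^*]^p$ is the ring of functions on the Frobenius twist and that the $P_i^{(n)}$ are algebraically independent even modulo $p$th powers: concretely, choosing the $P_i^{(n)}$ together with a complementary family of coordinates on $J_\infty\g^*$ as a ``$p$-basis'' exhibits $\k[J_\infty\g^*]$ as a free module over $\k[J_\infty\g^*]^p$ with basis the monomials in these coordinates of exponent $<p$, and the $P_i^{(n)}$-part of this basis already lies inside $\k[J_\infty\g^*]^{\g[[t]]}$. For surjectivity, I would filter by the arc-space grading and pass to a sufficiently generic point, reducing to the statement that an $\g[[t]]$-invariant regular function, after subtracting a polynomial in the $P_i^{(n)}$ with coefficients in $\k[J_\infty\g^*]^p$, must be constant along the $G[[t]]$-orbits through the generic (regular, and in fact regular semisimple on the relevant open locus) arcs; here the pointwise freeness of the $dP_i^{(n)}$ forces any such remainder to be a $p$th power, and one iterates.

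The main obstacle I anticipate is controlling the non-reduced and singular behaviour of $J_\infty\g^*$: unlike $\g^*$, the arc space is not a smooth variety and is typically non-reduced, so ``generic point'' arguments and the identification of $\g[[t]]$-invariants with $G[[t]]$-invariants on an open locus need care — one cannot simply invoke smoothness of the invariant-theoretic quotient. I expect to handle this by working with the good filtration coming from the $T$-action and reducing statements to the associated graded, where one can use the flatness of the jet functor applied to the smooth locus $\g^*_{\mathrm{reg}}$ (where $dP_i$ are independent, hence $J_\infty$ of it is a smooth scheme), together with a codimension estimate showing the complement $J_\infty\g^* \setminus J_\infty(\g^*_{\mathrm{reg}})$ is irrelevant for normal-domain-type arguments. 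A secondary technical point is that the equivalence of $\g[[t]]$-invariance and $G[[t]]$-invariance requires $p$ large enough that the exponential/divided-power arguments relating the Lie algebra and group actions on the jet ring are available — which is guaranteed by the hypothesis $p > h$ and is exactly why part (3) asserts $\k[J_\infty\g^*]^{\g[[t]]}$ rather than $\k[J_\infty\g^*]^{G[[t]]}$ on the left side of the decomposition. Once \eqref{e:tensormap} is an isomorphism, part (2) is immediate since $\k[J_\infty\g^*]^{\g[[t]]} \supseteq \k[J_\infty\g^*]^{G}$-invariants contain the full centre, and the explicit basis in \eqref{e:jetsrestrictedbasis} is read off as the reduced monomials $\prod (P_i^{(n)})^{a_{i,n}}$ with $0\le a_{i,n} < p$.
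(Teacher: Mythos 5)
There is a genuine gap, in fact two. First, your argument takes part (1) as an input (``I would recall the explicit description of $\k[J_\infty\g^*]^{G[[t]]}$ from part (1)''), but in characteristic $p$ this freeness statement is not available off the shelf: the characteristic-zero proofs (Ra{\"i}s--Tauvel, Beilinson--Drinfeld, Eisenbud--Frenkel) do not transfer, and in the paper part (1) is \emph{deduced from} parts (2) and (3), not the other way around (one writes a $G_m$-invariant $f$ in the free basis \eqref{e:jetsrestrictedbasis} over $\k[J_m\g^*]^p$, observes by uniqueness that the coefficients are again invariant $p$th powers, and inducts on degree). So your logical order is circular as it stands. Second, and more seriously, the heart of the theorem --- that $\k[J_\infty\g^*]^{\g[[t]]}$ is generated by $\k[J_\infty\g^*]^{G[[t]]}$ and $p$th powers and is free over $\k[J_\infty\g^*]^p$ on the restricted monomials --- is exactly the step you dispatch with ``the pointwise freeness of the $dP_i^{(n)}$ forces any such remainder to be a $p$th power, and one iterates.'' That assertion is the whole difficulty: it is the jet-space analogue of Veldkamp/Skryabin-type invariant theory, and it does not follow from genericity plus independence of differentials alone. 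The paper obtains it by first truncating to the finite-dimensional jet schemes $J_m\g^*$ (so that Noetherian and codimension arguments make sense) and then invoking Skryabin's theorem \cite[Theorem~5.4]{S}, whose hypotheses require two nontrivial verifications: the index computation $\ind(\g_m)=(m+1)\rank\g$ (a modular Ra{\"i}s--Tauvel argument via the regular semisimple locus), and the codimension-$\ge 2$ estimate for the complement of the Jacobian locus of the $P_{i,-j}$, proved using the block lower-triangular structure of the Jacobian (Lemma~\ref{L:rewriteders}) together with the codimension-three bound of \cite[Proposition~3.2]{BG}. Your sketch contains no substitute for this input, and without it neither the surjectivity nor the ``$p$-basis'' injectivity claim in your map \eqref{e:tensormap} is justified.

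Two smaller points. Your anticipated obstacle --- non-reducedness and singularity of $J_\infty\g^*$ --- is a misconception: $\g^*$ is an affine space, so $\k[J_\infty\g^*]$ is simply a polynomial ring in the variables $x^i_{-j}$; the real issue is infinite-dimensionality, which is why the reduction to $J_m\g^*$ (absent from your proposal) matters. Also, the $G[[t]]$-invariance of the generators $P_{i,-j}$ is not obtained by exponentiation for $p>h$; in the paper it is proved by modular reduction from the complex case (Lemma~\ref{L:PVSinvariants}, using \cite[Theorem~3.4.2]{Fr} and the generation of $J_\infty G$ by the arc groups of root subgroups), since in characteristic $p$ Lie-algebra invariance is strictly weaker than group invariance.
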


\begin{remark}
\label{R:weakerhypotheses}
Theorem~\ref{T:PVAcentreintro} can be proven using our methods under the assumption that $p$ is a very good prime for $G$. This means that $p$ is good for $G$ (see \cite[\textsection 6.4]{Jamntrl} and that $p \nmid n+1$ in type {\sf A}$_n$. We have worked under the assumption $p > h$ for simplicity, in order to maintain uniform hypotheses throughout the paper.
\end{remark}

In the final section of this paper we deduce a description of the centre of the vertex algebra $V^{-h^\vee}(\g)$ at the critical level. 
\begin{theorem}
\label{T:maintheorem}
Suppose that we are in one of the following two situations:
\begin{enumerate}
\item[(i)] $\g$ is a classical simple Lie algebra, and $p$ larger than the Coxeter number,
\item[(ii)] $\g$ is an exceptional simple Lie algebra and $p \gg 0$.
\end{enumerate}
Then we have:
\begin{enumerate}
\setlength{\itemsep}{2pt}
\item $V^{-h^\vee}(\g)^{G[[t]]}$ is a free commutative differential algebra generated by $r$ elements.
\item $\z V^{-h^\vee}(\g)$ is generated by $V^{-h^\vee}(\g)^{G[[t]]}$ and $\z_p V^{-h^\vee}(\g)$ as a commutative algebra.
\item $\z V^{-h^\vee}(\g)$ is a free $\z_p V^{-h^\vee}(\g)$-module of finite rank over $\z_p V^{-h^\vee}(\g)$.
\item $\z V^{-h^\vee}(\g)$ is isomorphic to the tensor product of $\z_p V^{-h^\vee}(\g)$ and $V^{-h^\vee}(\g)^{G[[t]]}$ over their intersection.
\end{enumerate}
\end{theorem}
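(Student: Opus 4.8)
The plan is to transport the description of the Poisson centre in Theorem~\ref{T:PVAcentreintro} through the PBW filtration $F_\bullet$ of $V^{-h^\vee}(\g)$, for which $\gr V^{-h^\vee}(\g)\cong\k[J_\infty\g^*]$ as a Poisson vertex algebra. Two inclusions are immediate. First, $\gr\z_p V^{-h^\vee}(\g)=\k[J_\infty\g^*]^{p}$, because a generator of the $p$-centre is, up to lower filtration terms, the $p$-th power of a linear coordinate, so its symbol is such a $p$-th power. Second, $\gr\z V^{-h^\vee}(\g)\subseteq\k[J_\infty\g^*]^{\g[[t]]}$, since the symbol of a central element is Poisson-central and the Poisson centre of $\k[J_\infty\g^*]$ is $\k[J_\infty\g^*]^{\g[[t]]}$. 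The whole theorem then reduces to producing \emph{enough honest central elements} of $V^{-h^\vee}(\g)$ to force equality in the last inclusion, and to keeping track of the resulting module structure.

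The crux is the construction of modular Segal--Sugawara vectors: elements $S_1,\dots,S_r\in V^{-h^\vee}(\g)^{G[[t]]}$ whose symbols are the fundamental invariants $P_1,\dots,P_r\in\k[\g^*]^{G}\subseteq\k[J_\infty\g^*]$ appearing in \eqref{eqp2}. In the classical cases I would exhibit these directly: the determinantal and permanental formulas for Segal--Sugawara vectors in types $\mathsf A,\mathsf B,\mathsf C,\mathsf D$ are defined over $\Z[\tfrac1N]$ with $N$ divisible only by primes $\le h$, so they specialise to $\k$ once $p>h$, and a direct computation identifies their symbols with the Chevalley invariants. In the exceptional cases no explicit integral model is recorded, but the complex Feigin--Frenkel generators are defined over some localisation $\Z_{(N)}$ with $N$ ineffective, so reduction modulo $p$ supplies the $S_i$ for $p\gg0$. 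This step, and the non-effectivity of $N$ in the exceptional case, is precisely why the hypotheses split into (i) and (ii); it is the main obstacle, everything afterwards being formal.

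Granting the $S_i$: since $\gr(T^{n}S_i)=T^{n}P_i$ and, by Theorem~\ref{T:PVAcentreintro}(1), the $T^{n}P_i$ are algebraically independent and generate $\k[J_\infty\g^*]^{G[[t]]}$, the commutative differential subalgebra $\langle S_1,\dots,S_r\rangle$ of $V^{-h^\vee}(\g)^{G[[t]]}$ has associated graded equal to $\k[J_\infty\g^*]^{G[[t]]}$. As the always-valid inclusion $\gr\bigl(V^{-h^\vee}(\g)^{G[[t]]}\bigr)\subseteq\k[J_\infty\g^*]^{G[[t]]}$ is thereby an equality, the standard "surjective on $\gr$ implies surjective" argument (legitimate since $F_\bullet$ is exhaustive and bounded below) gives $V^{-h^\vee}(\g)^{G[[t]]}=\langle S_1,\dots,S_r\rangle$, and freeness of this differential algebra on $r$ generators follows because $\gr$ of it, namely $\k[J_\infty\g^*]^{G[[t]]}$, is free on $r$ differential generators; this proves (1). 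For (2), let $A\subseteq\z V^{-h^\vee}(\g)$ be the subalgebra generated by $V^{-h^\vee}(\g)^{G[[t]]}$ and $\z_p V^{-h^\vee}(\g)$; then $\gr A$ contains the subalgebra of $\k[J_\infty\g^*]$ generated by $\k[J_\infty\g^*]^{G[[t]]}$ and $\k[J_\infty\g^*]^{p}$, which is all of $\k[J_\infty\g^*]^{\g[[t]]}$ by Theorem~\ref{T:PVAcentreintro}(2). The sandwich $\gr A\subseteq\gr\z V^{-h^\vee}(\g)\subseteq\k[J_\infty\g^*]^{\g[[t]]}$ then forces $\gr A=\gr\z V^{-h^\vee}(\g)$, hence $A=\z V^{-h^\vee}(\g)$ and, as a by-product, $\gr\z V^{-h^\vee}(\g)=\k[J_\infty\g^*]^{\g[[t]]}$.

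Finally, (3) and (4) come by lifting Theorem~\ref{T:PVAcentreintro}(3),(4): the explicit $\k[J_\infty\g^*]^{p}$-basis \eqref{e:jetsrestrictedbasis} of $\k[J_\infty\g^*]^{\g[[t]]}$ is built from monomials in the $T^{n}P_i$, and replacing each $P_i$ by $S_i$ yields elements of $\z V^{-h^\vee}(\g)$ whose symbols are exactly that basis. A filtered-module argument --- spanning by successively peeling off top symbols over $\z_p V^{-h^\vee}(\g)$, and linear independence because the symbols are independent over $\gr\z_p V^{-h^\vee}(\g)=\k[J_\infty\g^*]^{p}$ --- shows this lift is a $\z_p V^{-h^\vee}(\g)$-basis of $\z V^{-h^\vee}(\g)$, of the same rank as in \eqref{e:jetsrestrictedbasis}, giving (3). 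The tensor-product decomposition of Theorem~\ref{T:PVAcentreintro}(4) over the intersection lifts in the same manner: the intersection $V^{-h^\vee}(\g)^{G[[t]]}\cap\z_p V^{-h^\vee}(\g)$ is generated by the $p$-th powers of the $T^{n}S_i$, whose symbols are the generators of $\k[J_\infty\g^*]^{G[[t]]}\cap\k[J_\infty\g^*]^{p}$, and comparing graded pieces shows the natural map from the tensor product is an isomorphism, yielding (4).
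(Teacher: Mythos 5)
Your proposal follows essentially the same route as the paper: the reduction of the theorem to the existence of integrally defined Segal--Sugawara vectors $S_{i,-1}$ with symbols $P_{i,-1}$ is exactly Proposition~\ref{P:AWcriterion}, the classical cases are settled by Molev's explicit formulas defined over $R=\Z[1/n\mid 1\le n\le h]$ (with the same extra remark needed in type {\sf A} to pass between $\gl_N$ and $\sl_N$), the exceptional cases by the same ineffective integrality argument applied to the complex Feigin--Frenkel generators, and the filtered/graded bookkeeping for (1)--(4) matches the paper's.

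One step you elide deserves attention. Over $\k$, $\g[[t]]$-invariance does not imply $G[[t]]$-invariance, so after reducing the $S_{i,-j}$ modulo $p$ you must still prove that they lie in $V^{-h^\vee}(\g)^{G[[t]]}$ rather than merely in $\z V^{-h^\vee}(\g)=V^{-h^\vee}(\g)^{\g[[t]]}$; without this, part (1) and the identification $\gr V^{-h^\vee}(\g)^{G[[t]]}=\k[J_\infty\g^*]^{G[[t]]}$ do not follow from your sandwich argument. The paper supplies this via the specialisation argument of Lemma~\ref{L:PVSinvariants}: the $G_\C[[t]]$-invariance of $S^\C_{i,-j}$ is recorded as polynomial identities over $R$ for the actions of the root-subgroup arcs $u_{\alpha,m}$, these identities reduce modulo $p$, and $G[[t]]$ is generated by the $J_\infty u_\alpha$ (Lemma~\ref{L:Gschemegenerators}). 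Inserting that argument (which is routine given the earlier machinery) makes your proof complete.
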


Let $\mathcal{X}^{-h^\vee} := \Spec V^{-h^\vee}(\g)^{G[[t]]}$. From Theorem~\ref{T:maintheorem} we deduce a geometric description of the centre
\begin{eqnarray}
\label{e:geometricvertexveldkamp}
\Spec \z V^{-h^\vee}(\g) \isoto \mathcal{X}^{-h^\vee} \times_{(J_\infty \h^*/W)^{(1)}} (J_\infty \g^*)^{(1)}
\end{eqnarray}
The map $(J_\infty \g^*)^{(1)} \to (J_\infty \h^*/W)^{(1)}$ is the Frobenius twist of the jet morphism of the adjoint quotient $\g^* \to \h^*/\!/W$, whilst $\mathcal{X}^{-h^\vee} \to (J_\infty \h^*/W)^{(1)}$ arises from the natural maps $\k[(J_\infty \g^*)^{(1)}]^{G[[t]]} \cong \z_p V^{-h^\vee}(\g)^{G[[t]]} \to V^{-h^\vee}(\g)$. This should be compared with \eqref{e:geometricveldkamp}.

Theorem~\ref{T:maintheorem} is deduced from Theorem~\ref{T:PVAcentreintro} using a process of reduction modulo $p$. We make use of explicit formulas for the generators of $V^{-h^\vee}(\g)$ discovered over $\C$ by Molev \cite{MoSS, Mo}, in classical types. For exceptional types we use a rather primitive form of modular reduction, depending on the Feigin-Frenkel theorem \cite{FF}. 

Now in \cite{AW}, the authors also conjectured that the centre of $V^{k}(\lieg)$ for $k\neq -h^{\vee}$ is given only by the $p$-centre. We have the following counterexample 

\begin{example} Let $S:=\frac{1}{2}(\frac{1}{2}h_{-1}h_{-1}+f_{-1}e_{-1}+e_{-1}f_{-1})\vac\in V^{k}(\mathfrak{sl}_2)$ for arbitrary $k\in \k$. For $p=3$ the following element is central\footnote{In particular, from a direct calculation for $a\in \{e,f,h\}$ mod $3$ we have \begin{align*}
a(z)(2T(:S(w)S'(w):)+2(k+2)T^{(4)}S(w))
\sim\frac{2(k+2)}{(z-w)^3} T(:a(w)S(w):)+\frac{2(k+2)}{(z-w)^2}T^2(:a(w)S(w):)\, ,
\end{align*}
where $T$ denotes the translation operator, see Definition \ref{d2.3}, and $(\cdot)'=T(\cdot)$. Additionally, we have that $a(z)(c(w)-2T(:S(w)S'(w):)-2(k+2)T^{(4)}S(w))$ satisfies the same relation but with factor  $(k+2)$ instead of  $2(k+2)$. } 
in $V^{k}(\mathfrak{sl}_2)$
\begin{align*}
c:=& (S_{(-1)}S_{(-3)}-S_{(-2)}S_{(-2)})\vac+2(k+2)S_{(-5)}\vac+(k+2)k (h_{-6}-f_{-3}e_{-3}  -h_{-3}^2)\vac +\\
&(k+2)( f_{-2}e_{-1}h_{-3} +f_{-1}e_{-3}h_{-2} +f_{-3}e_{-2}h_{-1}- f_{-1} e_{-2} h_{-3} -f_{-3}e_{-1}h_{-2} - f_{-2}e_{-3}h_{-1}  )\vac\, .
\end{align*}
Note that the top graded component of $c$ with respect to the PBW filtration is given by $\gr (c)=2SS''-S'S'$. Hence $\gr (c) \in \k[J_\infty \mathfrak{sl}_2^*]^{SL_2[[t]]}$ and $\gr (c)=\notin \k[J_\infty \mathfrak{sl}_2^*]^p$, see Theorem \ref{T:PVAcentreintro}.
\end{example}

We note that this example implies that \cite[Conjecture 6.8]{JLM} is false: one can check that central element given here does not lie in ideal of $V^k(\sl_2)$ generated by the augmentation ideal of the $p$-centre. Thus the restricted quotient cannot be simple.

We briefly describe the structure of the paper. Section~\ref{S:groupsand modaffine} surveys the required theory of group schemes, arcs and jets, and sets up our conventions for defining the Kac-Moody affinisation. We also recall the precise statement of Veldkamp's theorem and its semiclassical limit.

In Section~\ref{S:modularVOAs} we recall the definition of a Poisson vertex algebra and vertex algebra over $\k$, paying special attention to the minor differences to the complex setting, which is better understood. We explain that the vertex algebra $V^k(\g)$ naturall degenerates to $\k[J_\infty\g^*]$ and we construct the basic $G[[t]]$-invariants which freely generate $\k[J_\infty\g^*]^{G[[t]]}$. Next we outline the procedure of modular reduction for invariants in vertex algebras. Finally we prove Theorem~\ref{T:PVAcentreintro} following an approach analogous to \cite{Fr}, adapting to the modular setting using a powerful theorem of Skryabin \cite{S}.

In the final Section~\ref{S:criticalcentre} we prove Theorem~\ref{T:maintheorem} using the tools of modular reduction and Molev's formulas \cite{MoSS, Mo}.

We expect that Theorem~\ref{T:maintheorem} will hold under the assumption that $p$ is very good for $G$. We note that the methods this paper are already sufficient to prove the theorem in type {\sf G}$_2$ for $p > 3$: Theorem~\ref{T:PVAcentreintro} holds under this hypothesis (Remark~\ref{R:weakerhypotheses}) whilst the formulas for Segal--Sugawara vectors discovered by Yakimova in \cite[\textsection 6]{Ya} can be defined over a $\Z[6^{-1}]$-lattice in $V^{-h^\vee}(\g)$, and so the Theorem follows in this case, following the observations of Section~\ref{ss:completingBCD} verbatim. Similar remarks hold for $\sl_N$ when $p \nmid N$, using the formulas of \cite{MC}, and the remarks of Section~\ref{ss:completingA}.

\subsection*{Acknowledgements}  We would like to thank Dmitriy Rumynin and Alexander Molev for useful correspondence on the subject of this paper, Andrew Linshaw for sharing his knowledge of the OPEdefs 3.1 Mathematica package. We are grateful to University of Bath, RIMS, Ningbo University and SUS Tech international center for mathematics for the excellent working conditions were this work was done. We offer special thanks to Gurbir Dhillon for numerous enlightening conversations. The first author is partially supported by JSPS KAKENHI Grant Number J21H04993. The second and third author are supported by a UKRI Future Leaders Fellowship, grant numbers MR/S032657/1, MR/S032657/2, MR/S032657/3.



\section{Group schemes, jets and arcs}
\label{S:groupsand modaffine}
\subsection{Arcs and jets}
\label{ss:arcsandjets}

Throughout this paper we fix an algebraically closed field $\k$ of characteristic $p > 0$. All vector spaces, algebras and schemes will be defined over $\k$, with the exception of Section~\ref{ss:integralforms} where we discuss modular reduction.

Here we discuss jet schemes and arc spaces. We refer the reader to \cite{Ish} for slightly more detail.

A {\it higher derivation} of a commutative algebra $A$ is a tuple $\partial = (\partial^{(i)} \mid i\ge 0)$ of linear endomorphisms of $A$ with $\partial^{(0)}$ equal to the identity, satisfying the following relations for $i \ge 0$:
\begin{eqnarray}
\label{e:HSrel}
\begin{array}{l}
\partial^{(i)} (ab) = \sum_{i_1 + i_2 = i} \partial^{(i_1)}(a) \partial^{(i_2)}(b).
\end{array}
\end{eqnarray}
Similarly a {\it higher derivation of order $m$} is a tuple $\partial = (\partial^{(i)} \mid i=0.,,,.m)$ satisfying \eqref{e:HSrel} for $i, j \le m$. A differential algebra (of order $m$) $(A, \partial)$ is an algebra equipped with a higher derivation (of order $m$). 

There is a universal differential algebra $(J_\infty A, \partial)$ with a homomorphism $\alpha : A\to J_\infty A$ such that, for every other differential algebra $(B, \sigma)$ admitting a homomorphism $\beta : A \to B$, there is a unique homomorphism $\varphi$ of differential algebras such that $\varphi \circ \alpha = \beta$.

We construct $J_\infty A$ as follows. First assume $A = \k[x_k \mid k \in K]$  is a polynomial ring, where $K$ is an index set. Define $J_\infty A := \k[x_{k}^{(j)} \mid k\in K, \ 0\le j]$. The homomorphism $\alpha : A \to J_\infty A$ is given by $x_k \mapsto x_k^{(0)}$ and the differential $\partial$ is determined uniquely by \eqref{e:HSrel} along with $\partial^{(i)} x_{k}^{(0)} = x_k^{(i)}$ and 
$$\partial^{(i)} \partial^{(j)} = \binom{i+j}{i} \partial^{(i+j)}$$ for all $i,j,k$ in the appropriate ranges. If $I \subseteq A$ is an ideal then we can define $J_\infty I := (\partial^{(i)} a \mid 0\le i, \ a\in I)$, which is a differential ideal of $J_\infty A$. 

Now if $A$ has a presentation $A = \k[x_k \mid k\in K] / I$ then we define $J_\infty A := J_\infty \k[x_k \mid k \in K] / J_\infty I$. The map $A \to J_\infty A$ and higher derivation $\partial$ are defined as above, and $A \to J_\infty A$ is readily seen to inject. It is not hard to see that the isomorphism type of $J_\infty A$ is independent the presentation, and that $J_\infty A$ satisfies the universal property mentioned above. It follows immediately that $J_\infty$ is an endofunctor on the category of commutative algebras, and $J_\infty A$ is known as the arc algebra of $A$.

If $A$ is a Hopf algebra with comultiplication $\Delta : A \to A\otimes A$, counit $\eta : A \to \k$ and antipode $S: A \to A$, then there is a natural Hopf structure on $J_\infty A$ extending the structure on $A$. The existence of such a structure follows from functoriality of $J_\infty$, however we can describe it explicitly: for $a\in A \subseteq J_\infty A$ we have
\begin{eqnarray}
\label{e:JHopfstructure}
\begin{array}{l}
 (\Delta(\partial^{(i)}a)) = \partial^{(i)}\Delta(a);\\
 S(\partial^{(i)}(a)) = \partial^{(i)} S(a);\\
 \eta(a) = a + J_m \Ker \eta.
 \end{array}
\end{eqnarray}
where we equip $A\otimes A$ with the structure of a differential algebra in the obvious manner, and note that $J_\infty A / J_\infty \Ker \eta \cong  J_\infty \k = \k$. 

For $m \in \N$ there is an analogous construction of a universal differential algebra $J_m A$ of order $m$. Briefly, $J_m \k[x_k \mid k \in K]$ is the subalgebra of $J_\infty \k[x_k \mid k\in K]$ generated by $\{x_k^{(j)} \mid k\in K, 0 \le j \le m\}$ with differential defined as before, along with the condition $\partial^{(i)} x_k^{(j)} = 0$ for $i + j > m$. We have $J_\infty A = \varinjlim J_mA$ as algebras.

If $X$ is an affine scheme then the {\it $m$-jet scheme} $J_m X$ is defined to be $J_m X = \Spec J_m A$ and the {\it arc space} is $\Spec J_\infty X$. These functors behave well under gluing and thus $J_m$ is a functor from schemes to schemes.  If $X$ has finite type then so do $J_m X$ for all $m \in \N$, whilst $J_m X$ is of profinite type.

Jet schemes and arc spaces are uniquely determined by the adjunctions
\begin{eqnarray*}
& & \Hom_{\Sch /\k}(Y, J_m X) = \Hom_{\Sch /\k}(Y \times_{\Spec(\k)} \Spec(\k[t]/(t^{m+1})), X),\\
& & \Hom_{\Sch /\k}(Y, J_\infty X) = \Hom_{\Sch /\k}(Y \times_{\Spec(\k)} \Spec(\k[[t]], X).
\end{eqnarray*}
and so the $\k$-points of $J_m X$ are naturally identified with the $\k[t]/(t^{m+1})$-points of $X$ for $m \in \N$, and with the $\k[[t]]$-points of $X$ for $m = \infty$.

If $G$ is a group scheme then, by our above remarks, both $J_m G$ and $J_\infty G$ are group schemes.
\begin{lemma}
\label{L:generateslemma}
Suppose that $G$ is a group scheme which is generated by a collection $(H_k)_{k\in K}$ of subgroup schemes. Then $J_\infty G$ is generated by $(J_\infty H_k)_{k \in K}$. 
\end{lemma}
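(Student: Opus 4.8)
The plan is to reduce the statement to a concrete property of the functor $J_\infty$ that is visible on the level of Hopf algebras. Recall that for a group scheme $G$ the statement that $G$ is generated by a family $(H_k)_{k\in K}$ of closed subgroup schemes means, by definition, that the only closed subgroup scheme of $G$ containing every $H_k$ is $G$ itself; equivalently, writing $A = \k[G]$ and $I_k \subseteq A$ for the Hopf ideal defining $H_k$, the largest Hopf ideal of $A$ contained in $\bigcap_{k} I_k$ is the ideal $\Ker\eta_G$ cut out by $H_1\cdots H_m = G$ — more usefully, $G$ is generated by the $H_k$ if and only if for some $m$ the multiplication morphism $\prod_{j=1}^m H_{k_j} \to G$ is scheme-theoretically surjective for a suitable finite word $k_1,\dots,k_m$ in $K$ (this reformulation is available over a field; see e.g. the standard theory of group scheme generation). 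So first I would reduce to showing: if $\mu\colon H_{k_1}\times\cdots\times H_{k_m}\to G$ is faithfully flat (equivalently, scheme-theoretically surjective), then $J_\infty\mu\colon J_\infty H_{k_1}\times\cdots\times J_\infty H_{k_m}\to J_\infty G$ is scheme-theoretically surjective, and that $J_\infty$ of a product of group schemes is the product of the $J_\infty$'s (the latter is immediate from the adjunction defining $J_\infty X$, since $\Spec(\k[[t]])$-points commute with finite products).

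The heart of the matter is therefore to show that $J_\infty$ preserves scheme-theoretic surjections of affine schemes, or more precisely that $J_\infty$ applied to a surjective Hopf algebra map $A \onto B$ (with kernel a Hopf ideal $I$) yields a surjection $J_\infty A \onto J_\infty B$ with kernel the differential ideal $J_\infty I$. This I would extract directly from the construction given in the excerpt: choose a presentation $A = \k[x_k \mid k\in K]/I$ and note that $B = \k[x_k \mid k\in K]/I'$ for $I' \supseteq I$, whence by definition $J_\infty A = J_\infty\k[x_\bullet]/J_\infty I$ and $J_\infty B = J_\infty\k[x_\bullet]/J_\infty I'$, and $J_\infty I \subseteq J_\infty I'$ because $J_\infty I$ is generated as a differential ideal by $I$, which lies in $I'$. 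Hence $J_\infty A \twoheadrightarrow J_\infty B$. Dualising, $J_\infty\mu$ is a closed immersion-complement, i.e. scheme-theoretically surjective. Combining this with the product compatibility from the previous paragraph shows $J_\infty\mu\colon \prod_j J_\infty H_{k_j}\to J_\infty G$ is scheme-theoretically surjective, so $J_\infty G$ is the closure of the image of the multiplication morphism of the family $(J_\infty H_k)$, i.e. $J_\infty G$ is generated by $(J_\infty H_k)_{k\in K}$.

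I expect the main obstacle to be purely definitional bookkeeping rather than a genuine difficulty: one must be a little careful about what ``generated by'' means for group schemes that are neither reduced nor finite type (here $G$ is affine algebraic but $J_\infty G$ is only of profinite type), and in particular one should phrase everything in terms of Hopf ideals / scheme-theoretic images rather than set-theoretic images of $\k$-points, since a non-reduced group scheme can fail to have ``enough'' $\k$-points. Concretely, the delicate point is verifying that scheme-theoretic surjectivity of $\mu$ passes to the arc level, for which the explicit presentation-based description of $J_\infty$ above is exactly the right tool; alternatively one can argue via the adjunction, noting that $J_\infty$ is a right adjoint so it need not preserve epimorphisms a priori, which is precisely why the hands-on argument with differential ideals is needed. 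A secondary bit of care: the formula $J_\infty(X\times Y) = J_\infty X \times J_\infty Y$ should be checked on the nose from $\Hom(Y', J_\infty(X_1\times X_2)) = \Hom(Y'\hat\times\Spec\k[[t]], X_1\times X_2) = \prod_i\Hom(Y'\hat\times\Spec\k[[t]], X_i)$, and the Hopf-algebraic compatibility in \eqref{e:JHopfstructure} then guarantees $J_\infty\mu$ is a morphism of group schemes, so that its scheme-theoretic image is a subgroup scheme, closing the argument.
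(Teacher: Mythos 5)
Your reduction collapses at its central step, and the problem is not just bookkeeping. Scheme-theoretic surjectivity of the word map $\mu\colon H_{k_1}\times\cdots\times H_{k_m}\to G$ corresponds to \emph{injectivity} of the ring map $\k[G]\to \k[H_{k_1}\times\cdots\times H_{k_m}]$, not to a surjective Hopf algebra map $A\onto B$; what you actually verify from the presentation-based construction --- that $J_\infty$ sends algebra surjections to algebra surjections, i.e.\ preserves closed immersions --- is true (it is a left adjoint on algebras) but is not the statement you need. The statement you need is that $J_\infty$ preserves dominance (injectivity of ring maps), and this is \emph{false} over a field of characteristic $p$: the relative Frobenius $F\colon \mathbb{G}_a\to\mathbb{G}_a$, dual to $\k[y]\to\k[x]$, $y\mapsto x^p$, is faithfully flat (in particular scheme-theoretically surjective), yet $J_\infty F$ sends $y^{(m)}\mapsto \partial^{(m)}\bigl((x^{(0)})^p\bigr)=0$ whenever $p\nmid m$, so $J_\infty F$ is not dominant and its scheme-theoretic image is a proper closed subgroup scheme of $J_\infty\mathbb{G}_a$. (Your parenthetical ``faithfully flat (equivalently, scheme-theoretically surjective)'' is also not an equivalence.) So even granting your first reduction --- that generation is witnessed by a single finite word map being scheme-theoretically surjective, which itself needs justification (connectedness/stabilization hypotheses, and some care with an infinite index set $K$) --- the transfer of that property through $J_\infty$ is unproved and false in the generality you invoke; it could only be salvaged by injecting separability/smoothness information about the specific word maps, which your argument does not provide.

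This is also a genuinely different route from the paper, which never introduces word maps or any surjectivity statement: it stays entirely on the Hopf-algebra side, encodes the hypothesis as the vanishing of the largest Hopf ideal $H(\cap_k I_k)$ contained in the intersection of the defining ideals $I_k$, and proves a compatibility of the operator $H(-)$ with $J_\infty$ using \eqref{e:JHopfstructure} before concluding. A further slip in your last paragraph: $\mu$ is not a homomorphism of group schemes (a product of subgroups is not a subgroup), so \eqref{e:JHopfstructure} does not make $J_\infty\mu$ one, and the scheme-theoretic image of a single word map need not be a subgroup scheme --- though this is harmless compared with the failure of the main step above.
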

\begin{proof}
Let $A$ be a commutative Hopf algebra. Observe that the collection of Hopf ideals of $A$ is stable under sums, but not intersections. It follows that for any algebra ideal $I \subseteq A$ there is a unique maximal Hopf ideal subject to $H(I) \subseteq I$. More precisely it is sum of all Hopf ideals contained in $I$. 

We claim that for any ideal $I$ we have $J_\infty H(I) = H(J_\infty I)$. Using properties \eqref{e:JHopfstructure} one can show that $J_\infty H(I)$ is a Hopf ideal of $J_\infty A$ contained in $J_\infty I$, and so $J_\infty H(I) \subseteq H(J_\infty I)$. Conversely we have $H(I) \subseteq I \subseteq J_\infty I$ and so $H(I) \subseteq HJ_\infty(I)$. In order to obtain $J_\infty H(I) \subseteq H J_\infty(I)$ it suffices to show that $H J_\infty(I)$ is closed under $\partial$. Using \eqref{e:JHopfstructure} once again one can easily show that $H$ preserves the class of  differential-closed ideals, and this completes the proof of the claim.

For $k \in K$ we let $I_k \subseteq \k[G]$ be the Hopf ideal defining the subgroup $H_k$. The hypothesis of the lemma is equivalent to the assertion $H(\cap_k I_k) = 0$, and the goal is to show that $H(\cap_k J_\infty I_k) = 0$. Now we have $H(\cap_k J_\infty I_k) = H(J_\infty \cap_k I_k) = J_\infty H(\cap_k I_k) = 0$, and the proof is complete.
\end{proof}

\subsection{Simple group schemes}
\label{ss:simplegroupschemes}

Let $G$ be a connected, simple group scheme of rank $r$, and write $\g = \Lie(G)$. The Coxeter number (resp. dual Coxeter number) is denoted $h$ (resp. $h^\vee$).

For the rest of the paper we will assume that $p > h$ which has the following consequences:
\begin{enumerate}
\setlength{\itemsep}{2pt}
\item[(i)] $2h^\vee$ is invertible;
\item[(ii)] The Lie algebra $\g$ is simple (see \cite[Proposition~1.2]{Ka70} for example);
\item[(iii)] the space of $\g$-invariant forms on $\g$ is one dimensional and every nonzero $\g$-invariant form is non-degenerate, by (ii);
\item[(vi)] The Killing form is non-vanishing, hence non-degenerate (see \cite[Theorem~4.8]{SS}).
\end{enumerate}

Throughout this article we fix the {\it normalised Killing form} on $\g$ which is defined by
\begin{eqnarray}
\label{e:normalisedKilling}
\kappa(x, y) = \frac{1}{2h^\vee} \Tr(\ad(x) \ad(y)).
\end{eqnarray}
When we consider the induced bilinear form on $\h^*$, the normalisation ensures that the square length of every long root is equal to 2, which is the standard convention in the literature \cite{Fr, Ka}. In classical types this form is just the trace form $x, y\mapsto \Tr(xy)$ associated to the natural representation.

We fix a torus $T \subseteq G$, let $X^*(T)$ denote the character lattice, and write $\Phi \subseteq X^*(T)$ for the roots. For $\alpha \in \Phi$ we consider the morphism of group schemes corresponding to the $\alpha$-root subgroup $u_\alpha : \Gbb_a \to G$ (see \cite[\textsection 21.c]{Mi}).

\begin{lemma} 
\label{L:Gschemegenerators}
\begin{enumerate}
\item $G$ is generated by $\{u_\alpha \mid \alpha\in \Phi\}$.
\item $J_\infty G$ is generated by $\{J_\infty u_\alpha \mid \alpha\in \Phi\}$.
\end{enumerate}
\end{lemma}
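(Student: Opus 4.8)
The plan is to prove the two assertions of Lemma~\ref{L:Gschemegenerators} in order, with part (1) being a standard structural fact about simple algebraic groups and part (2) following by combining (1) with Lemma~\ref{L:generateslemma}.

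\textbf{Part (1).} First I would invoke the standard theory of reductive group schemes. Since $G$ is connected, simple and simply connected with maximal torus $T$, the subgroup generated by the root subgroups $\{u_\alpha(\Gbb_a) \mid \alpha \in \Phi\}$ is a closed normal subgroup scheme $N \subseteq G$. The key point is that $N$ contains all the root $\SL_2$'s (or $\SL_2$/$\mathrm{PGL}_2$ subgroups) $\langle u_\alpha, u_{-\alpha}\rangle$, and since $p > h$ these are genuine copies of $\SL_2$, whose tori together generate $T$ (the coroots span the cocharacter lattice up to finite index, and being simply connected they actually span it). Hence $N \supseteq T$, so $N$ contains a Borel $B = T \ltimes U$ together with the opposite Borel $B^-$; by the Bruhat decomposition (or simply because a connected group generated by two opposite Borels is everything, using that $G/N$ would be affine, connected and generated by the images of two opposite unipotent radicals which are trivial) we get $N = G$. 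Concretely one can cite \cite[\textsection 21.c]{Mi} or the structure theory there: $G$ is generated by its root subgroups together with $T$, and $T$ is generated by the images of the coroots $\alpha^\vee : \Gbb_m \to T$, each of which factors through $\langle u_\alpha, u_{-\alpha}\rangle$ via the standard $\SL_2$ relation $\alpha^\vee(t) = u_\alpha(t^{-1}) u_{-\alpha}(-t) u_\alpha(t^{-1}) \cdot u_\alpha(-1)u_{-\alpha}(1)u_\alpha(-1)$ (up to the usual Weyl-element correction). This shows $T$ lies in the group generated by the $u_\alpha$, completing (1). I would keep this argument short, essentially citing \cite{Mi}.

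\textbf{Part (2).} This is immediate: by part (1) the group scheme $G$ is generated by the family of subgroup schemes $(u_\alpha(\Gbb_a))_{\alpha \in \Phi}$, so Lemma~\ref{L:generateslemma} applies with $H_\alpha = u_\alpha(\Gbb_a)$ and yields that $J_\infty G$ is generated by $(J_\infty H_\alpha)_{\alpha \in \Phi} = (J_\infty u_\alpha(\Gbb_a))_{\alpha \in \Phi}$, which is what is meant by being generated by $\{J_\infty u_\alpha \mid \alpha \in \Phi\}$ (identifying $J_\infty$ of the image of $u_\alpha$ with the image of $J_\infty u_\alpha$, using that $J_\infty$ is a functor and $u_\alpha$ is a closed immersion onto a subgroup scheme).

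\textbf{Main obstacle.} The only subtle point is making sure the root subgroups genuinely generate the torus at the scheme level (not just at the level of $\k$-points or reduced subschemes), which is exactly where simple connectedness and $p > h$ are used; once that is granted, part (2) is a formal consequence of the already-established Lemma~\ref{L:generateslemma}. In the write-up I would simply cite the appropriate statement from \cite{Mi} for part (1) rather than reprove the structure theory.
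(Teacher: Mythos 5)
Your proposal is correct and follows essentially the same route as the paper: part (1) is just the standard fact that a semisimple group is generated by its root subgroups, which the paper cites directly as \cite[Proposition~21.62]{Mi}, and part (2) is the formal application of Lemma~\ref{L:generateslemma} to the family $(u_\alpha(\Gbb_a))_{\alpha\in\Phi}$. Your extra discussion of recovering $T$ from the root $\SL_2$'s is fine but not needed as a separate argument (and does not actually require $p>h$ or simple connectedness), since the cited result already covers it.
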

\begin{proof}
Part (1) is \cite[Proposition~21.62]{Mi}, whilst part (2) follows from Lemma~\ref{L:generateslemma}.
\end{proof}

\subsection{Chevalley restriction and Veldkamp's theorem}

 Let $U(\g)$ be the universal enveloping algebra of $\g$, equipped with the  Poincar{\'e}--Birhoff--Witt  (PBW) filtration $U(\g) = \bigcup_{i\ge 0} U(\g)_i$. The associated graded algebra is $\gr U(\g) = S(\g) = \k[\g^*]$ as $G$-algebras.
 
Write $Z(\g)$ for the centre of $U(\g)$. Both $G$ and $\g$ act naturally on $U(\g)$ and $\k[\g^*]$ via automorphisms extending the adjoint representation, and we have the following obvious inclusions $U(\g)^G \subseteq U(\g)^\g = Z(\g)$ and $\k[\g^*]^G \subseteq \k[\g^*]^\g$, as well as $\gr U(\g)^G \subseteq \k[\g^*]^G$ and $\gr Z(\g) \subseteq \k[\g^*]^\g$.

We write $\k[\g^*]^p = \{f^p \mid f\in \k[\g^*]\}$ for the subalgebra of $p$th powers. There is an obvious inclusion $\k[\g^*]^p \subseteq \k[\g^*]^\g$ since $\g$ acts by derivations.

Recall that $\g$ admits a natural $G$-equivariant restricted structure $x \mapsto x^{[p]}$, which gives rise to a large central subalgebra of $U(\g)$ known as the {\it $p$-centre}. It is generated by elements $x^p - x^{[p]}$. Since the map $x\mapsto x^p - x^{[p]}$ is $p$-semilinear $Z_p(\g)$ is a polynomial ring in $\dim(\g)$ generators, and its spectrum is naturally identified as a $G$-set with the first Frobenius twist of the coadjoint representation $\Spec Z_p(\g) = (\g^*)^{(1)}$, and $\gr Z_p(\g) = \k[\g^*]^p$ under  the identification $\gr U(\g) = \k[\g^*]$; see \cite{Jamntrl} for more detail.

The next result, known as Veldkamp's theorem, describes the centre of $U(\g)$ for $p > h$; \cite{V}. Numerous authors improved the bound on $p$ (see \cite[\textsection 9]{Jamntrl}), and in fact the theorem holds under the standard hypotheses \cite[Theorem~3.5]{BG}. 

The statement presented here can easily be deduced from \cite[Section~3]{BG} and \cite[Theorem~5.4(1)]{S}, see also \cite{KW}.


\begin{theorem}\label{th1}
We have the following:
\begin{enumerate}
\item $\k[\g^*]^G$ is generated by $r$ algebraically independent elements $\bP_1,...,\bP_r$.
\item $\k[\g^*]^\g$ is generated by $\k[\g^*]^G$ and $\k[\g^*]^p$
\item $\k[\g^*]^\g$ is a free $\k[\g^*]^p$-module with basis given by restricted monomials
$$\{\bP_1^{k_1} \cdots \bP^{k_r}_r \mid 0 \le k_j < p \text{ for all } j\}.$$
\item $\k[\g^*]^\g \cong \k[\g^*]^G \otimes_{(\k[\g^*]^p)^G} \k[\g^*]^p$. 
\item The $\bP_i$ admit filtered lifts $Z_1,...,Z_r$ and $U(\g)^G$ is a polynomial algebra on $r$ generators.
\item $Z(\g)$ is generated by $U(\g)^G$ and $Z_p(\g)$.
\item $Z(\g)$ is a free $Z_p(\g)$-module of rank $p^{\dim \g}$, with basis given by restricted monomials
$$\{Z_1^{i_1}\cdots Z_r^{i_r} \mid 0 \le i_j < p \text{ for all } j\}.$$
\item $Z(\g) \cong U(\g)^G \otimes_{Z_p(\g)^G} Z_p(\g)$. $\hfill\qed$
\end{enumerate}
\end{theorem}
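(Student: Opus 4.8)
The plan is to establish first the ``classical'' statements (1)--(4), which concern the Poisson algebra $\k[\g^*]$, and then to lift them through the PBW filtration to obtain (5)--(8) for $U(\g)$. For (1), use the normalised Killing form $\kappa$, which is non-degenerate since $p>h$, to identify $\g^*\cong\g$ as $G$-modules, so $\k[\g^*]^G\cong\k[\g]^G$; the Chevalley restriction theorem in characteristic $p>h$ identifies $\k[\g]^G$ with $\k[\h]^W$ via restriction to a Cartan subalgebra, and since $p>h$ forces $p\nmid|W|=d_1\cdots d_r$, the Shephard--Todd--Chevalley theorem shows $\k[\h]^W$ is a polynomial ring on $r$ homogeneous generators of degrees $d_1\le\cdots\le d_r=h$. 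Transporting back produces algebraically independent homogeneous $\bP_1,\dots,\bP_r$ generating $\k[\g^*]^G$, with $\deg\bP_i=d_i\le h<p$. All of this is available in \cite[\textsection 3]{BG}.

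Part (2) is Skryabin's theorem \cite[Theorem~5.4(1)]{S}; its hypotheses (a non-degenerate $\g$-invariant form, regularity of generic coadjoint stabilisers) hold when $p>h$. Since Frobenius is injective, $f^p$ is $G$-invariant iff $f$ is, so $(\k[\g^*]^p)^G=(\k[\g^*]^G)^p=\k[\bP_1^p,\dots,\bP_r^p]$, and the polynomial ring $\k[\bP_1,\dots,\bP_r]$ is manifestly free over $\k[\bP_1^p,\dots,\bP_r^p]$ with basis the restricted monomials $\{\bP^{\mathbf k}:0\le k_j<p\}$. Base change along $\k[\bP_1^p,\dots,\bP_r^p]\hookrightarrow\k[\g^*]^p$, combined with (2), then yields both (3) and (4); the injectivity that makes (4) an isomorphism (equivalently the linear independence in (3)) is exactly the $p$-independence of $\bP_1,\dots,\bP_r$ over $\Frac\k[\g^*]^p$, which follows from generic smoothness of the adjoint quotient $\g^*\to\h^*/\!/W$ (the differentials $d\bP_i$ are linearly independent at regular semisimple elements, valid as $p>h$), and is in any case contained in \cite[\textsection 3]{BG}.

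To lift to $U(\g)$: because $\deg\bP_i=d_i\le h<p$, the symmetrisation (PBW) map $S^{d_i}(\g)\to U(\g)$ is defined and $G$-equivariant, so its value $Z_i$ on $\bP_i$ lies in $U(\g)^G$ and has principal symbol $\bP_i$. Since $\gr U(\g)^G\subseteq\k[\g^*]^G=\k[\bP_1,\dots,\bP_r]=\gr\k[Z_1,\dots,Z_r]$, an induction on filtration degree gives $U(\g)^G=\k[Z_1,\dots,Z_r]$, a polynomial ring on $r$ generators, which is (5). For (6) we have $Z_p(\g)\,U(\g)^G\subseteq Z(\g)$, and passing to associated graded algebras, $\gr Z(\g)\subseteq(\gr U(\g))^\g=\k[\g^*]^\g=\k[\g^*]^p\,\k[\g^*]^G\subseteq\gr(Z_p(\g)\,U(\g)^G)$ by (2), so all inclusions are equalities and $Z(\g)=Z_p(\g)\,U(\g)^G$. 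For (7), the restricted monomials $Z_1^{i_1}\cdots Z_r^{i_r}$ ($0\le i_j<p$) have symbols $\bP_1^{i_1}\cdots\bP_r^{i_r}$, which form a $\gr Z_p(\g)=\k[\g^*]^p$-basis of $\gr Z(\g)=\k[\g^*]^\g$ by (3); the standard filtered-module lemma (elements whose symbols form a $\gr R$-basis of $\gr M$ form an $R$-basis of $M$) shows they form a $Z_p(\g)$-basis of $Z(\g)$. Finally (8) follows from (6), (7) and the parallel statement that $U(\g)^G=\k[Z_1,\dots,Z_r]$ is free over $Z_p(\g)^G$ with the same restricted-monomial basis: one checks by the same $\gr$ argument that $\gr Z_p(\g)^G=(\k[\g^*]^p)^G=\k[\bP_1^p,\dots,\bP_r^p]$, whence base change of bases along $Z_p(\g)^G\hookrightarrow Z_p(\g)$ identifies $U(\g)^G\otimes_{Z_p(\g)^G}Z_p(\g)$ with $Z(\g)$.

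The substantive input, and hence the main obstacle, is the pair of external deep results used for (1)--(4): Skryabin's theorem \cite[Theorem~5.4(1)]{S} (which underlies (2), and with it (3)--(4)) together with the Chevalley restriction isomorphism in characteristic $p$ and the generic smoothness of the adjoint quotient that supplies the $p$-independence of the $\bP_i$. Granting these, everything else --- the explicit lift $\bP_i\rightsquigarrow Z_i$ via symmetrisation and the passage between $\k[\g^*]$-statements and $U(\g)$-statements via associated graded algebras --- is routine filtered bookkeeping.
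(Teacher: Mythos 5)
Your proposal is correct and follows exactly the route the paper itself indicates: the paper gives no proof of this theorem, stating only that it ``can easily be deduced from \cite[Section~3]{BG} and \cite[Theorem~5.4(1)]{S}'', which are precisely the two inputs you invoke for (1)--(4). The remaining filtered/graded bookkeeping you supply for (5)--(8), including the observation that symmetrisation is available because $\deg \bP_i \le h < p$, is sound and consistent with the standard deduction.
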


\section{Modular vertex algebras}
\label{S:modularVOAs}

\subsection{Spaces of formal series}
\label{ss:formalseries}
Let $V$ a $\K$-vector space, and $V[z]$ (resp. $V[[z]]$, resp. $V(\!(z)\!)$) the space of polynomials (resp. formal power series, resp. formal Laurent series) in $z$ with coefficients in $V$. We warn the reader that $V \otimes_\k \k[[z]] \subsetneq V[[z]]$ when $V$ is infinite dimensional. These spaces are equipped with a higher derivation $\partial_z = (\partial_z^{(i)})_{\ge 0}$ where $\partial_z^{(0)}$ is the identity map and $\partial_z^{(k)}$ is defined by
\begin{eqnarray}
\label{e:dividedpowerders}
\partial^{(i)}z^{m}=\binom{m}{i}z^{m-i} \text{ for } i\in \N, \,\, m\in \mathbb{Z} .
\end{eqnarray}
The reader can easily check that \eqref{e:HSrel} holds. 
Note that if $i=i_0+i_1p+\cdots + i_kp^{k}$ is the $p$-adic expansion of $i \in \Z_{>0}$ we have a factorisation
\begin{eqnarray}
\label{e:padicHSdecomp}
\partial^{(i)}=(\partial)^{i_0} (\partial^{(p)})^{i_1}\cdots (\partial^{(p^k)})^{i_k}.
\end{eqnarray}

\subsection{Modular vertex algebras }

Vertex algebras have been widely studied over the complex numbers, and so to emphasise the fact that our underlying field has positive characteristic we call them {\it modular vertex algebras}, see \cite{Bo1, BR}. 

\begin{definition}\label{d2.1} A modular vertex algebra is a $\K$-vector space $V$ equipped with a family of bilinear products 
\begin{equation*}
\begin{split}
&V\otimes V\rightarrow V\\
&a\otimes b \rightarrow a_{(n)}b\, ,  \quad n\in \ZZ\, , 
\end{split}
\end{equation*}
and a vector $\vac\in V$ such that
\begin{enumerate}
\item $a_{(n)}b=0$ for $n \gg 0$, 
\item $\vac_{(n)}a=\delta_{n,-1}a$, 
\item $a_{(-1)}\vac=a$, 
\item  The following identity is satisfied
\begin{equation}\label{B}
\sum_{j\in \ZZ_+} (-1)^{j}\binom{n}{j}\left(a_{(m+n-j)}b_{(k+j)}-(-1)^{n}b_{(n+k-j)}a_{(m+j)})\right) 
=\sum_{j\in \ZZ_+} \binom{m}{j}( a_{(n+j)}b)_{(m+k-j)} \, .
\end{equation}
\end{enumerate}

\end{definition}

We also have the following equivalent definition.

\begin{definition}\label{d2.3}  
A modular vertex algebra is a $\K$-vector space $V$, equipped with a vector $\vac \in V$, a family of endomorphism $T^{(k)}\in \End(V)$, $k\in \ZZ_{> 0}$,  an a linear map 
\[Y\colon  V\to \Hom_{\k}(V, V(\!(z)\!))\,, \qquad a\mapsto Y(a,z)\,, \]
subject to the following axioms:

\begin{enumerate}
\item[(1$^\prime$)] \; $Y(\vac,z)=I$, \; $Y(a,z)\vac\in V[[z]]$, \; $Y(a,z)\vac\big|_{z=0} = a$.

\item[(2$^\prime$)] \; 
$Y(T^{(k)}a,z)=\partial^{(k)}_z Y(a,z)$\ \  and \ \  $T^{(k)}\vac=0$.

\item[(3$^\prime$)] \;  For every $a,b\in V$, there exists $N\in\N$ such that 
\begin{equation*}
\begin{split}
(z_{1}-z_2 )^{N}Y(a,z_1)Y(b,z_2) = (z_{1}-z_2)^{N}Y(b,z_2) Y(a, z_1) \, .\\
\end{split}
\end{equation*}
\end{enumerate}
\end{definition}
The equivalence between the Definition \ref{d2.1} and Definition \ref{d2.3} is well-known over $\C$ and is explained in \cite[Theorem~4.3]{Ma} in case the base is an arbitrary ring. We sketch the correspondence here. From Definition \ref{d2.1} we define $T^{(k)}\in \End(V)$ for $k\geq 1$ and $Y:V\rightarrow\End(V)((z))$ as follows
\begin{equation*}
\begin{split}
& Y(a,z)=\sum_{n\in \ZZ}a_{(n)}z^{-n-1}\, , \qquad T^{(k)}(a):=a_{(-k-1)}\vac \, , \qquad \text { for } a\in V.
\end{split}
\end{equation*}
The conditions (1), (2) and (3) follow easily from \eqref{B}. On the other hand, using Definition \ref{d2.3} we can write $Y(a,z) = \sum_{n\in \Z} a_{(n)} z^{-n-1}$ and the operators $a_{(i)}$ satisfy the required axioms.

\begin{definition}
For a vertex algebra $V$ the centre is the subspace  
\begin{equation*}\label{eq1.3}
\mathfrak{z}(V):=\{b\in V \mid a_{(n)}b=0 \text{ for all } a\in V \text{ and } n\geq 0\}\, .
\end{equation*}
\end{definition}
From \eqref{B} with $n=0$ we see that
the centre coincides with the set of elements $v\in V$ such that 
\[[v_{(m)}, a_{(k)}]=0 \qquad \text{for all }a\in V\]
for all $m,k\in \ZZ$. 

\subsection{Modular Poisson vertex algebras}
\label{ss:modularPVA}
A commutative vertex algebra $V$ is one such that $u_{(n)} v = 0$ for all $u, v \in V$ and $n \ge 0$. Equivalently it satisfies $Y( \bullet, z) : V \to V[[z]]$. In this case, the product  $a, b \mapsto a_{(-1)}b$ is commutative, associative and unital with unit $\vac$. And, the endomorphisms $T^{(k)}$ define a higher derivation of this commutative algebra \eqref{e:HSrel}. Moreover the category of commutative differential algebras is equivalent to the category of commutative vertex algebras. The key difference between characteristic zero and the modular setting is that differential algebras over $\C$ are equipped with a higher differential of order 1, as opposed to a higher differential over $\k$. 

\begin{definition} 
Let $(V, \vac, T)$ be a commutative vertex algebra, with $T = (T^{(k)})_{k \ge 0}$. 

For a formal power series $a(z) = \sum_{m\in \Z}a_m z^m$ the {\it polar part} of  $a(z)$ is defined to be $a_-(z) =\sum_{m<0} a_{(m)}z^m$.

We say that $V$ is a Poisson vertex algebra if there is a linear map
\begin{eqnarray}
\begin{array}{rcl}
Y_{-} & : & V\to \Hom(V, z^{-1}V[z^{-1}]),\\
& & a\mapsto Y_{-}(a,z)=\sum_{n\geq 0}a_{(n)}z^{-n-1}
\end{array}
\end{eqnarray}
subject to the following axioms:

\begin{enumerate}
\item[(i)]  $Y_{-}(T^{(k)}a,z)=\partial^{(k)}_z Y(a,z)$.

\item[(ii)] \; 
$Y_{-}(a,z)b=\left(\sum_{k\geq 0} z^kT^{(k)} Y(b,-z)a\right)_{-}$.

\item[(iii)] \;  $[a_{(m)}, Y_{-}(b,w)]=\sum_{k\geq 0}\binom{m}{k}\left(w^{m-k}Y_{-}(a_{(k)}b, w)\right)_{-}\, .$ 

\item[(iv)] \;  $Y_{-}(a,w)(b\cdot c)=(Y_{-}(a,w)b)\cdot c+(Y_{-}(a,w)c)\cdot b$ \, .
\end{enumerate}
We refer the reader to \cite[\textsection 16.2]{FB} for an introduction.
\end{definition}

Now suppose that $V$ is a graded vertex algebra $V = \bigoplus_{i \ge 0} V_i$. Then we can define a filtration $V = \bigcup_{i \ge 0} F_i V$ by letting $F_d V$ be the subspace spanned by
\begin{eqnarray}
\label{e:weightdepend}
a_{-n_1 - 1}^1 a_{-n_2 - 1}^2 \cdots a_{-n_m - 1}^m \vac
\end{eqnarray}
where $a^1,...,a^m \in V$ are strong generators with $a^i \in V_{d_i}$ and
$\sum_{i=1}^m d_i \le  d.$ For the following theorem see for example \cite[\textsection 3.7]{Ar}. 

\begin{theorem}
This filtration is compatible with the vertex algebra structure, and the associated graded algebra $\gr V$ inherits the structure of a Poisson vertex algebra.
\end{theorem}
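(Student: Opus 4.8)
The statement to prove is that for a graded vertex algebra $V = \bigoplus_{i\ge 0} V_i$, the filtration $F_\bullet V$ defined by \eqref{e:weightdepend} is compatible with the vertex algebra structure, and $\gr V$ inherits the structure of a Poisson vertex algebra. Let me think about how to prove this.

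\textbf{Plan.}

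First I would make precise what ``compatible with the vertex algebra structure'' means: I need to show that for $a \in F_d V$ and $b \in F_e V$, one has $a_{(n)} b \in F_{d+e} V$ for all $n \in \Z$, and moreover that $a_{(n)} b \in F_{d+e-1} V$ for $n \ge 0$ (this is the key inequality that produces the Poisson bracket in the associated graded). I would also record that $T^{(k)}$ preserves the filtration, i.e. $T^{(k)} F_d V \subseteq F_d V$, which is immediate since $T^{(k)} a = a_{(-k-1)} \vac$ and, using the commutator formula, acting by $T^{(k)}$ on a monomial \eqref{e:weightdepend} produces a sum of monomials of the same or lower type (here I can use the skew-symmetry/translation axioms, or directly observe $T^{(k)}$ is a higher derivation-like operator coming from $\vac_{(\bullet)}$; more carefully, $[T^{(k)}, a_{(n)}] = \binom{n}{k}a_{(n-k)}$, which does not increase filtration degree since $a$ keeps its weight). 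Establishing these containments is really the whole content; the Poisson vertex algebra axioms (i)--(iv) for $\gr V$ then follow by reading off the leading terms in the Borcherds identity \eqref{B}, exactly as in the complex case.

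Second, for the containment statements I would argue by induction on the length $m$ of the defining monomials \eqref{e:weightdepend}, using the Borcherds identity \eqref{B} to reduce products $a_{(n)} b$ where $a$ is itself a monomial of length $m$ to a combination of simpler products. Concretely, if $a = c_{(-k-1)} a'$ with $c \in V_{d_0}$, $a' \in F_{d-d_0} V$, then \eqref{B} with suitable $m, n, k$ expresses $a_{(n)} b = (c_{(-k-1)} a')_{(n)} b$ in terms of expressions $c_{(s)}(a'_{(t)} b)$ and $a'_{(t)}(c_{(s)} b)$ with the indices constrained; the point is a bookkeeping argument showing that in every resulting term the total weight used is $\le d + e$, and that if $n \ge 0$ then in fact at least one "positive mode" is applied so the total drops to $\le d+e-1$. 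The grading is essential here: since $c \in V_{d_0}$, the operator $c_{(s)}$ shifts weight by $d_0 - s - 1$, and elements of $V_i$ with $i < 0$ vanish, which forces enough of the modes appearing to be creation (negative) modes. I would isolate this as a lemma: \emph{if $a \in V_{d}$ is homogeneous then $a_{(n)} F_e V \subseteq F_{d+e} V$ for all $n$, with strict drop to $F_{d+e-1}V$ for $n \ge 0$}, proved by induction on $e$ (length of monomial in $b$) using axiom~(3$'$)/locality and the Borcherds identity, and then bootstrap to general $a \in F_d V$ by writing $a$ as a sum of monomials and peeling off one mode at a time.

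Third, once the filtration is known to be compatible, I would define the operations on $\gr V$: the $(-1)$-product descends to the commutative associative product, the $T^{(k)}$ descend to the higher derivation, and for $n \ge 0$ the map $a_{(n)}: F_d V \to F_{d+e-1}V$ descends to a well-defined map $\bar a_{(n)}: \gr_d \times \gr_e \to \gr_{d+e-1}$, assembling into $Y_-(\bar a, z) = \sum_{n \ge 0} \bar a_{(n)} z^{-n-1}$. Then axioms (i)--(iv) of a modular Poisson vertex algebra are verified by taking the associated-graded (lowest-filtration-degree) part of the corresponding vertex algebra identities: (i) is the graded shadow of (2$'$); (iv), the Leibniz rule, comes from the $n\ge 0$ part of \eqref{B} with the quadratic term $(a_{(n+j)}b)_{(m+k-j)}$ — here the key observation is that for $n \ge 0$ the "wrong-order" term $a_{(n+j)}b$ already lives in lower filtration, so modulo $F_{d+e-1}$ the operators $a_{(m)}$ and $b_{(k)}$ with the relevant indices commute, giving commutativity of $Y_-$ acting through a derivation; (iii) is again the $n \ge 0$ truncation of \eqref{B}; (ii) follows from skew-symmetry together with (i). This is entirely parallel to \cite[\textsection 16.2]{FB} and \cite[\textsection 3.7]{Ar}; the only modular subtlety is to keep the divided powers $T^{(k)}$ and the binomial coefficients $\binom{m}{j}$, $\binom{n}{j}$ throughout rather than writing $T^k/k!$.

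\textbf{Main obstacle.} The only real work is the combinatorial bookkeeping in the second step: showing via the Borcherds identity \eqref{B} that expanding $(c_{(-k-1)}a')_{(n)}b$ never exceeds total weight $d+e$, and strictly drops below it when $n \ge 0$. This is where one must use the grading (vanishing of negative-weight components) to control how many positive modes necessarily appear, and it is the one place where a careless argument could fail. Everything afterward is a routine "pass to the leading term" verification of the Poisson vertex algebra axioms. I would also flag that one should check the filtration is exhaustive and that $F_0 V \supseteq \k\vac$ with $\gr_0 V$ the ground field image — immediate from the definition \eqref{e:weightdepend} since the empty monomial is $\vac \in V_0$.
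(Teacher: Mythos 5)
Your sketch is correct and follows the standard argument: the paper itself gives no proof, only a citation to \cite[\textsection 3.7]{Ar}, and the two containments you isolate ($a_{(n)}b\in F_{d+e}V$ for all $n$, with a drop to $F_{d+e-1}V$ for $n\ge 0$, proved by induction on monomial length via the Borcherds identity) are exactly the content of the cited proof. The only blemish is a sign in $[T^{(k)},a_{(n)}]=(-1)^k\binom{n}{k}a_{(n-k)}$, which does not affect the filtration-degree bookkeeping.
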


\subsection{Modular affine vertex algebras} 

Let $G$ be a connected, simple group scheme over $\k$ such that the characteristic of $\k$ is very good for the root system. Recall that $\kappa$ denotes the normalised Killing form \eqref{e:normalisedKilling} on $\g = \Lie(G)$.

The loop Lie algebra is $\g(\!(t)\!) := \g \otimes \k(\!(t)\!)$ and the Kac--Moody affinization $\hg := \g(\!(t)\!) \oplus \k K$ of $\g$ is the central extension of $\g(\!(t)\!)$ afforded by $\kappa$. Note that $K \in \hg$ is central and the Lie brackets in $\hg$ are determined by $[xt^i, yt^j] = [x,y]t^{i+j} + \delta_{i, -j} \kappa(x,y) i K$. We use the standard notation $xt^i := x_i$ which may denote either an element or an endomorphism of $V^k(\g)$. 

Let $\k_k$ be the one dimensional representation of $\g[[t]] \oplus \k K$ upon which $\g[[t]]$ acts trivially and $K$ acts by $k$. The vacuum module for $\hg$ is the induced module
\begin{equation}\label{eq1.2}
V^{k}(\g) := U(\widehat{\lieg})\otimes_{\g[[t]] \oplus \k K}\k_{k},
\end{equation}
and $k$ is known as {\it the level} of $V^k(\g)$.


The vacuum module carries a canonical structure of modular vertex algebra, see \cite{FB, Ka}.  
If we pick and ordered basis $\{x^i \mid  i=1, ..., \dim \mathfrak{g}\}$ for $\g$, then the Poincare--Birkhoff--Witt (PBW) theorem implies that we have a basis given by ordered monomials
 \begin{equation}\label{pbw}
 x^{i_1}_{-n_1-1}\cdots x^{i_{m}}_{-n_m-1}\vac
 \end{equation}
  such that $0 \le n_1\leq n_2\leq \cdots \leq n_{m}$, and $i_{j}\leq i_{j+1}$ whenever $n_{j}=n_{j+1}$. The vacuum vector in $V^{k}(\mathfrak{g})$ is $\vac=1\otimes 1$. For $k\in \N$ the translation operator $T^{(k)}$ is determined by $T^{(k)}\vac=0$ and  
\begin{eqnarray}
\label{e:trans1}
T^{(k)}x^{i}_{-n-1}\vac=\binom{n+k}{k}x^{i}_{-n-k-1}\vac 
\end{eqnarray}
 whilst in general, $T^{(k)}$ acts on $V^k(\g)$ by
\begin{eqnarray}
\label{e:trans2}
T^{(k)}(  x^{i_1}_{n_1}\cdots x^{i_{m}}_{n_m}\vac)=\sum_{j_1+\cdots + j_m=m} T^{(j_1)}(x^{i_1}_{n_1})\cdots T^{(j_m)}(x^{a_{m}}_{n_m})\vac\, .
\end{eqnarray}
The fields are defined as follows. We have $Y(\vac,z)=\Id_{V^k(\g)}$ and
\[Y(x^{i}_{-1}\vac,z)=x^i(z) :=\sum_{n\in \ZZ} x^{i}_{n}z^{-n-1},\]
 whilst in general we have
\begin{equation}\label{no}
Y(  x^{i_1}_{-n_1-1}\cdots x^{i_{m}}_{-n_m-1}\vac,z)=:\partial_{z}^{(n_1)}x^{i_1}(z)\cdots \partial_{z}^{(n_m)}x^{i_m}(z):
\end{equation}
where $:\ \  :$ denotes the normally ordered product (see \cite{Ka}).  The locality of the fields is a consequence of the explicit description of Lie brackets in $\hg$.

In this article we are interested in describing the centre $\mathfrak{z}(V^{k}(\lieg))$. Consider the space of $\g[ [t]]$ invariants
\[ V^{k}(\lieg)^{\g[[t]]} =\{b\in V^{\kappa}(\lieg) \mid (x_n)\/b=0 \text{ for } n \ge 0\}\, .\]
The following useful property follows from \eqref{no}; see \cite[Lemma~3.1.1]{Fr} for more detail.
\begin{lemma}\label{lin}
$\mathfrak{z}V^{k}(\g)= V^{k}(\lieg)^{\lieg[[t]]}.$ $\hfill \qed$
\end{lemma}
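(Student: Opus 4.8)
\textbf{Proof plan for Lemma~\ref{lin}.}
The plan is to show the two inclusions $\z V^k(\g) \subseteq V^k(\g)^{\g[[t]]}$ and $V^k(\g)^{\g[[t]]} \subseteq \z V^k(\g)$ separately, the first being essentially immediate and the second requiring the structure of the vertex operators \eqref{no}. For the easy direction, recall that by definition $\z V^k(\g)$ consists of $b$ with $a_{(n)}b = 0$ for all $a \in V^k(\g)$, $n \ge 0$; taking $a = x^i_{-1}\vac$ and extracting the coefficient of $z^{-n-1}$ in $Y(x^i_{-1}\vac, z) = x^i(z)$, we get $x^i_{n} b = 0$ for all $n \ge 0$ and all basis elements $x^i$, hence $(x_n) b = 0$ for $x \in \g$, $n \ge 0$, i.e. $b \in V^k(\g)^{\g[[t]]}$.

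For the reverse inclusion, suppose $b \in V^k(\g)^{\g[[t]]}$, so that $x^i_n b = 0$ for all $i$ and all $n \ge 0$. I must show $a_{(n)} b = 0$ for every $a \in V^k(\g)$ and every $n \ge 0$. Write a PBW monomial $a = x^{i_1}_{-n_1-1}\cdots x^{i_m}_{-n_m-1}\vac$; by \eqref{no} the field $Y(a, z)$ is the normally ordered product $:\partial_z^{(n_1)}x^{i_1}(z)\cdots \partial_z^{(n_m)}x^{i_m}(z):$. The key point is that in a normally ordered product of currents, all annihilation operators $x^i_{n}$ with $n \ge 0$ are placed to the right; since $\partial_z^{(n_j)}$ only shifts modes, each $\partial_z^{(n_j)}x^{i_j}(z)$ still decomposes into a "creation part" (modes $x^{i_j}_n$, $n < 0$) and an "annihilation part" (modes $n \ge 0$), and the normal ordering puts the latter on the right. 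Therefore the coefficient of any non-negative power $z^{-n-1}$, $n \ge 0$, in $Y(a,z)b$ is a sum of terms each of which has at least one annihilation operator $x^{i_j}_{n'}$ (with $n' \ge 0$) acting on $b$ first — here one uses that for the coefficient of $z^{-n-1}$ with $n\ge 0$ to be nonzero, at least one factor must contribute a mode with non-negative index. Hence every such term kills $b$, giving $a_{(n)}b = 0$ for $n \ge 0$. Since the PBW monomials span $V^k(\g)$ and each $a_{(n)}$ is linear, this extends to all $a$.

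The only slightly delicate point — and the one I would expect to be the main obstacle — is making precise the claim that the $z^{-n-1}$-coefficient ($n\ge 0$) of a normally ordered product of (derivatives of) currents, applied to $b$, must involve an annihilation mode hitting $b$. The cleanest way is induction on the number $m$ of factors: one uses the recursive definition of the normally ordered product $:A(z)B(z):\, = A(z)_- B(z) + B(z) A(z)_+$ (where $A(z)_\pm$ collect the modes of non-negative/negative index), together with the observation that $B(z)_+$ applied to $b$ vanishes (as $B(z)$ is a sum of derivatives of currents and $b$ is $\g[[t]]$-invariant) and that $A(z)_-$ raises the power of $z$, so the only way to reach a non-positive total $z$-exponent is through the $A(z)_+$ piece — which again annihilates (the already-$B$-acted) $b$ by the inductive hypothesis applied after commuting an annihilation mode of $A$ to the right past the creation modes in $B(z)b$, picking up only further currents via the bracket relations in $\hg$. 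One then just has to bookkeep that the central term and the finitely many correction terms from $[\hg, \hg]$ do not spoil the degree count. This is exactly the computation carried out in \cite[Lemma~3.1.1]{Fr} over $\C$, and since the bracket relations in $\hg$ and the mode expansion \eqref{no} are formally identical in positive characteristic (the divided-power operators $T^{(k)}$ and $\partial_z^{(k)}$ only reshuffle modes), the argument transfers verbatim; we therefore omit the details and refer to loc.\ cit.
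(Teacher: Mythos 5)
Your proposal is correct and follows essentially the same route as the paper, which simply observes that the equality follows from the normally ordered product formula \eqref{no} and refers to \cite[Lemma~3.1.1]{Fr} for the details; your fleshed-out version of the argument (easy inclusion via $a=x^i_{-1}\vac$, hard inclusion via the mode expansion of normally ordered products with annihilation operators on the right) is exactly the content of that reference, and your remark that the divided-power operators $\partial_z^{(k)}$ preserve the regular/singular decomposition (so that a negative power of $z$ forces at least one non-negative mode) is the one point genuinely needing a check in characteristic $p$, which goes through since $\binom{m}{k}=0$ for $0\le m<k$ over any ring.
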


\subsection{The $p$-centre of modular affine vertex algebras}

We introduce a central subalgebra of $V^{k}(\lieg)$, following \cite{AW}. The restricted structure on $\g$ gives rise to a natural restricted structure on $\hg$ determined by $(xt^i)^{[p]} = x^{[p]} t^{pi}$ and $K^{[p]} = K$. We denote the $p$-centre of $U(\hg)$ by $Z_p(\hg)$, which is generated by $\{x^p - x^{[p]} \mid x\in \hg\}$. It is isomorphic to a polynomial ring in countably many generators.

We define the {\it $p$-centre $\z_p V^k(\g) \subseteq V^k(\g)$} to be the image of the natural map $Z_p(\hg) \to U(\hg) \to V^k(\g)$. Note that $\mathfrak{z}_{p} V^{k}(\lieg)\subseteq V^{k}(\lieg)^{\lieg[[t]]}$
, and so Lemma \ref{lin} implies the following result.

\begin{lemma}
\label{L:pcentre}
$\mathfrak{z}_{p} V^{\kappa}(\lieg)\subseteq \mathfrak{z}V^{\kappa}(\lieg).$ $\hfill \qed$
\end{lemma}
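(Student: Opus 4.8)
The statement to prove is Lemma~\ref{L:pcentre}: $\mathfrak{z}_p V^\kappa(\lieg) \subseteq \mathfrak{z}V^\kappa(\lieg)$, i.e. the $p$-centre is contained in the centre of the vertex algebra.

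\medskip

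\textbf{Proof proposal.} The plan is to chain together two facts that have already been assembled. First, by the discussion immediately preceding the statement, $\mathfrak{z}_p V^k(\lieg)$ is by definition the image of the $p$-centre $Z_p(\hg) \subseteq U(\hg)$ under the natural surjection $U(\hg) \to V^k(\g)$. Every generator $x^p - x^{[p]}$ of $Z_p(\hg)$ with $x \in \hg$ is central in $U(\hg)$, hence commutes with the whole of $U(\hg)$; in particular it commutes with the adjoint action of $\g[[t]]$ on $U(\hg)$, since that action is by inner derivations via elements of $\g[[t]] \subseteq \hg$. Passing to the quotient $V^k(\g) = U(\hg)/U(\hg)\g_+^k$, the image of $Z_p(\hg)$ therefore lands in the $\g[[t]]$-invariants $V^k(\g)^{\g[[t]]}$, using that the $\g[[t]]$-action on $V^k(\g)$ coincides with the left-module structure (as recalled in Section~\ref{S:modularVOAs}). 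This is precisely the parenthetical remark in the text, ``$\mathfrak{z}_p V^k(\lieg) \subseteq V^k(\lieg)^{\lieg[[t]]}$''.

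\medskip

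Second, I would invoke Lemma~\ref{lin}, which identifies $\mathfrak{z}V^k(\g) = V^k(\lieg)^{\lieg[[t]]}$. Combining the two inclusions gives
\[
\mathfrak{z}_p V^k(\lieg) \subseteq V^k(\lieg)^{\lieg[[t]]} = \mathfrak{z}V^k(\lieg),
\]
which is the assertion. So the lemma is essentially a formal consequence of the two preceding results, and no genuinely new argument is needed beyond unwinding the definitions.

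\medskip

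\textbf{Main obstacle.} There is no serious obstacle; the only point requiring a little care is verifying that the $\g[[t]]$-action on $V^k(\g)$ really is induced from the inner adjoint action on $U(\hg)$ and stabilises the defining left ideal $U(\hg)\g_+^k$ — but this is exactly what was established in the paragraph defining $V^k(\g)$ as the quotient of $U(\hg)$. Granting that, the generators $x^p - x^{[p]}$ of $Z_p(\hg)$ are killed by $\ad(y)$ for every $y \in \g[[t]]$ already in $U(\hg)$, so their images are killed by $y_n$ for $n \ge 0$ in $V^k(\g)$, which is the definition of lying in $V^k(\lieg)^{\lieg[[t]]}$. Hence the statement follows.
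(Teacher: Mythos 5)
Your proof is correct and follows exactly the paper's (very brief) argument: the image of $Z_p(\hg)$ lies in $V^k(\lieg)^{\lieg[[t]]}$ because its generators are central in $U(\hg)$, and then Lemma~\ref{lin} identifies $V^k(\lieg)^{\lieg[[t]]}$ with $\mathfrak{z}V^k(\lieg)$. No issues.
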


\subsection{The current group and the centre} 
\label{ss:Ginvandcentre}

The {\it current group} $G[[t]]$ of $\k[[t]]$-points of $G$ is equal to the group of $\k$-points of $J_\infty G$, by definition. Let $\epsilon$ be a formal variable satisfying $\epsilon^2 = 0$. The Lie algebra $\Lie G[[t]]$ of $G[[t]]$ is defined to be the kernel of the homomorphism $G(\k[[t]][\epsilon]) \to G(\k[[t]])$ coming from the map $\k[[t]][\epsilon] \to \k[[t]]$ by functoriality. By \cite[p. 192]{Mi} we have $\Lie G[[t]] = \g[[t]]$.  See also \cite{AM}. 
\begin{lemma}
\label{L:currentaction}
For any $k\in \k$ there is a natural rational $G[[t]]$-action on $V^{k}(\g)$ such that the differential at the identity element is the left action of $\g[[t]]$.
\end{lemma}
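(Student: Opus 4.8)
The plan is to construct the action by building it first on the root subgroups and their jet groups, then invoking the generation statement from Lemma~\ref{L:Gschemegenerators}(2) to glue these into an action of all of $J_\infty G$, whose $\k$-points are $G[[t]]$. First I would fix a root $\alpha \in \Phi$ and examine the one-parameter subgroup $u_\alpha : \G_a \to G$, with corresponding root vector $e_\alpha \in \g$. Since $p > h$, the element $e_\alpha$ acts locally nilpotently on $\g$ (indeed $(\ad e_\alpha)^h = 0$), and hence each $e_\alpha t^n \in \g[[t]]$ acts locally nilpotently on $V^k(\g)$ by the weight/filtration considerations in \eqref{e:weightdepend}: on the finite-dimensional pieces $F_d V^k(\g)$ the operator $(e_\alpha)_n$ for $n \ge 0$ is nilpotent, and for $n < 0$ it raises the relevant degree, so the exponential series $\exp(\sum_n c_n (e_\alpha)_n)$ makes sense as an automorphism of $V^k(\g)$ for any $(c_n) \in \k[[t]]$ (only finitely many terms contribute on each $F_d$). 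This exponential gives a morphism $J_\infty \G_a = \G_a[[t]] \to \Aut(V^k(\g))$ which is rational in the obvious sense, and whose differential at the identity recovers the action of $e_\alpha \otimes \k[[t]] \subseteq \g[[t]]$.

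Next I would check the compatibility/relations. The subgroups $\{u_\alpha(\G_a[[t]])\}_{\alpha \in \Phi}$ satisfy the Chevalley commutator relations inside $G[[t]]$ (these are inherited from $G$ after base change to $\k[[t]]$), and the corresponding exponentiated operators on $V^k(\g)$ satisfy the matching relations because the latter can be verified on the associated graded Poisson vertex algebra $\gr V^k(\g) \cong \k[J_\infty \g^*]$, where the $G[[t]]$-action is visibly the geometric one coming from functoriality of $J_\infty$ applied to the coadjoint action $G \curvearrowright \g^*$, and then lifted: any relation among unipotent one-parameter subgroups that holds on the graded level and is compatible with the filtration holds on the nose, since the ambiguity lies in a lower filtered piece and an inductive argument on the filtration closes it. Equivalently, and more cleanly, one uses Lemma~\ref{L:Gschemegenerators}(2): $J_\infty G$ is generated as a group scheme by the $J_\infty u_\alpha$, so to define a homomorphism out of $J_\infty G$ it suffices to define compatible homomorphisms out of each $J_\infty \G_a$ respecting the defining relations of the amalgam, which is exactly what the Chevalley relations provide.

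Having assembled a group scheme homomorphism $J_\infty G \to \Aut(V^k(\g))$ — or rather a compatible family making $V^k(\g)$ a rational $J_\infty G$-module — I would take $\k$-points to obtain the desired action of $G[[t]] = (J_\infty G)(\k)$, noting rationality is automatic since $V^k(\g) = \bigcup_d F_d V^k(\g)$ is a union of finite-dimensional submodules stable under the construction. The differential at the identity is then computed subgroup-by-subgroup: on $u_\alpha(\G_a[[t]])$ it is the action of $e_\alpha \otimes \k[[t]]$ by the exponential computation above, and since $\g[[t]] = \Lie G[[t]]$ (by \cite[p.~192]{Mi}, as recalled just before the lemma) is spanned by the $e_\alpha \otimes \k[[t]]$ together with $\h \otimes \k[[t]]$, and the torus part $T[[t]]$ is handled similarly via the grading (it acts semisimply, exponentiating the $\h[[t]]$-action), the differential is the full left action of $\g[[t]]$, as claimed.

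The main obstacle I anticipate is the rigour of the exponential/convergence argument in the modular setting: one must be careful that $\exp$ of a locally nilpotent operator is well-defined even though denominators $1/k!$ appear. This is where $p > h$ is essential — the relevant nilpotency index of each $\ad(e_\alpha)$, and hence of the operators $(e_\alpha)_n$ on each graded piece, is bounded by $h < p$, so only the terms $1/k!$ with $k < p$ occur and these are invertible. A second, more bookkeeping-heavy point is verifying that the assembled family of automorphisms genuinely descends through the amalgamated structure of $J_\infty G$ rather than merely through $G$; here the cleanest route is to lean on Lemma~\ref{L:generateslemma}/Lemma~\ref{L:Gschemegenerators}(2) together with the observation that $V^k(\g)$, being a filtered deformation of the geometric $J_\infty G$-module $\k[J_\infty \g^*]$, inherits the relations by the standard filtered lifting argument.
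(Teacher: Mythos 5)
Your construction stalls at its very first step, and the justification you give for it is incorrect. You claim the exponential $\exp\bigl(\sum_{n\ge 0} c_n (e_\alpha)_n\bigr)$ makes sense because ``the nilpotency index of $\ad(e_\alpha)$, and hence of the operators $(e_\alpha)_n$ on each graded piece, is bounded by $h<p$,'' and because the pieces $F_d V^k(\g)$ are finite dimensional. Neither assertion holds: the filtered pieces of \eqref{e:weightdepend} are infinite dimensional (the modes $x^i_{-n-1}$ occur to arbitrary depth), the bound $(\ad e_\alpha)^h=0$ already fails for $\sl_2$, and, most importantly, the nilpotency index of a root vector acting on $V^k(\g)$ is not controlled by its index on $\g$: on a PBW monomial of length $m$ (say $f_{-1}^m\vac$ in type {\sf A}$_1$) the operator $(e_\alpha)_0$ behaves like the raising operator on the $m$-th symmetric power of the adjoint representation, so its nilpotency order grows linearly in $m$. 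Hence your stated reason for the truncation of $\exp$ before the problematic terms $1/k!$, $k\ge p$, is false. (The exponential can in fact be saved, but by a quite different argument which you do not give: for $X=e_\alpha\otimes c(t)\in\g[[t]]$ one has $X^{[p]}=0$, the element $X^p-X^{[p]}$ is central in $U(\hg)$ and kills $\vac$, so $X^p=0$ on $V^k(\g)$ and the truncated exponential is defined. Without this $p$-centre input the step collapses.) A similar problem afflicts your treatment of the torus: elements of $\h[[t]]$ with nonzero constant term are not nilpotent and cannot be ``exponentiated'' in characteristic $p$.

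The second gap is the assembly. Lemma~\ref{L:Gschemegenerators}(2) says the root jet groups generate $J_\infty G$; generation is not a presentation, so defining operators on each $J_\infty u_\alpha$ and ``checking the Chevalley relations'' does not by itself produce a homomorphism from $G[[t]]$ — you would need a Steinberg-type presentation of the simply connected group over $\k[[t]]$, plus the torus relations, which is substantial extra input. Your proposed shortcut — that any relation holding on $\gr V^k(\g)\cong\k[J_\infty\g^*]$ lifts by induction on the filtration — is not valid: two filtration-preserving automorphisms inducing the same map on the associated graded need not coincide, so the ``ambiguity in lower filtered pieces'' does not close. This relation-checking is precisely what the paper outsources: it identifies $V^k(\g)$ with $\widetilde V^k(\g)=U(\tg)\otimes_{\tg_+}\k_k$ for $\tg=\g\otimes\k[t,t^{-1}]\oplus\k K$, invokes \cite[Theorem~7.90]{Mar} to obtain the action of the minimal Kac--Moody group $G[t]$, and then extends to $G[[t]]$ using local smoothness for the $t$-adic topology. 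If you insist on the bottom-up route you would in effect be reconstructing that Chevalley--Kostant integration (divided powers on an integral form and a presentation of the group over $\k[[t]]$), i.e.\ reproving the cited theorem rather than giving a shorter proof.
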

\begin{proof}

The action of $G[[t]]$ on $\hat{\g}\cong\g((t))\oplus \k K$ is given by $g\cdot (A(t)+cK)=gA(t)g^{-1}+K\Res_{t}(g^{-1}dg,y)+K c$ where $g^{-1}dg$ is the logarithmic differential, see also \cite[Sec 1.3.6]{Fr}. This action lift to an  action on $U({\hg})$ and on the vertex algebra through the surjective map $U(\hg) \to V^{k}(\g)$.

\end{proof}

The next result follows from Lemma~\ref{lin} and Lemma~\ref{L:currentaction}.
\begin{corollary}
\label{C:groupinvariantscentre}
$V^k(\g)^{G[[t]]} \subseteq \z V^k(\g)$. $\hfill \qed$
\end{corollary}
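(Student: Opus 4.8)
The plan is to reduce the inclusion to an infinitesimal statement and then quote Lemma~\ref{lin}. Let $b \in V^k(\g)^{G[[t]]}$. First I would note that the line $\k b$ is then a $G[[t]]$-stable subspace of $V^k(\g)$ on which the group acts trivially, so it is a trivial one-dimensional rational representation of $G[[t]]$.

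Next I would differentiate at the identity. By Lemma~\ref{L:currentaction} the rational $G[[t]]$-action on $V^k(\g)$ has differential equal to the left action of $\g[[t]] = \Lie G[[t]]$; since the differential of a trivial representation is zero, it follows that $\g[[t]]$ annihilates $b$. Unwinding the notation, this says exactly that $x_n b = 0$ for every $x \in \g$ and every $n \ge 0$, i.e.\ $b \in V^k(\g)^{\g[[t]]}$. Finally, Lemma~\ref{lin} identifies $V^k(\g)^{\g[[t]]}$ with $\z V^k(\g)$, so $b \in \z V^k(\g)$, which is the desired conclusion.

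The only step that needs any care is the passage from "$b$ is fixed by the group $G[[t]]$" to "$b$ is killed by the Lie algebra $\g[[t]]$". This relies on the rationality (local finiteness) of the $G[[t]]$-action supplied by Lemma~\ref{L:currentaction}, which lets one work inside a finite-dimensional $G[[t]]$-submodule containing $b$ and differentiate the action honestly; equivalently, one can test against $\k[[t]][\epsilon]$-points with $\epsilon^2 = 0$ using the $t$-adically continuous action, in the same spirit as the computation of $\Lie G[[t]]$ in Section~\ref{ss:Ginvandcentre}.
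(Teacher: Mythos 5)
Your proof is correct and follows exactly the route the paper intends: the corollary is stated as an immediate consequence of Lemma~\ref{L:currentaction} and Lemma~\ref{lin}, i.e.\ one differentiates the rational $G[[t]]$-action to pass from group invariance to $\g[[t]]$-invariance and then invokes $\z V^k(\g) = V^k(\g)^{\g[[t]]}$. Your added care about why a $G[[t]]$-fixed vector is killed by $\g[[t]]$ (local finiteness of the rational action) is exactly the standard point being implicitly used.
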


\section{Invariants and Arc spaces}

\subsection{The semiclassical limit of the affine vertex algebra}
We equip $V^k(\g)$ with the filtration described in \eqref{e:weightdepend}. This filtration is ascending, exhaustive $V^k(\g) = \bigcup_{d \ge 0} F_d V^k(\g)$, and connected $F_{0}V^{k}(\mathfrak{g})=\KK\vac$. Furthermore, $G[[t]]$, $\g[[t]]$ and $\{T^{(i)} \mid i \ge 0\}$ all preserve the filtered pieces.

This gives rise to an associated graded space
\begin{align}
\label{e:gradedV}
\begin{array}{rcl}
\gr V^{k}(\mathfrak{g}) & =&\bigoplus_{d\geq 0}F_dV^{k}(\mathfrak{g})/F_{d-1}V^{k}(\mathfrak{g})\vspace{5pt}\\
&=& S(\g[t^{-1}] t^{-1})=\k[J_\infty \g^*]
\end{array}
\end{align}
where the final identification follows from the remarks of Section~\ref{ss:arcsandjets} (see \cite[\textsection 3.8, (34)]{Ar} for more detail). There is an induced  action of $G[[t]]$ and $\g[[t]]$ on $\gr V^{k}(\g)$. Via \eqref{e:gradedV} these actions coincide with the natural ones on $S(\hg) / (\g_+^0)$ which arise from the adjoint action of $G[[t]]$ on $\g(\!(t)\!) / \g[[t]]$, in the same manner as Lemma~\ref{L:currentaction}. This is an isomorphism of $\g[[t]]$-modules, and we will identify $\gr V^k(\g) = \k[J_\infty \g^*]$ in the sequel.

Similarly the translation operator $T$ gives rise to a higher derivation $\partial = (\partial^{(0)}, \partial^{(1)}, \partial^{(2)},...)$ on $S(\g[t^{-1}] t^{-1})$ by the rule $\partial^{(i)}( v + F^{d-1}V^k(\g)) := T^{(i)}(v) + F^{d-1} V^k(\g)$ for $i, d\ge 0$. More explicitly, $\partial^{(i)}$ coincides with $(-1)^i\partial_t^{(i)}$ where $\partial_t$ is described in Section~\ref{ss:formalseries}.

Combining our remarks with Lemma~\ref{lin} we have the inclusion
\begin{eqnarray}
\label{e:lineq}
\gr \z V^k(\g) \subseteq \k[J_\infty \g^*]^{\g[[t]]}.
\end{eqnarray}
We also have
\begin{eqnarray}
\label{e:peq}
\gr \z_p V^k(\g) = S(\g[t^{-1}]t^{-1})^p.
\end{eqnarray}

By Theorem \ref{th1} we have a homogeneous invariant generating set $\bP_1,...,\bP_r \in \k[\g^*]^G$. If we let $G[[t]]$ act on $\k[\g^*]$ via the homomorphism $G[[t]] \onto G$ then the inclusion $\k[\g^*] \subseteq \k[J_\infty \g^*]$ given by $x \mapsto x_{-1}$ is $G[[t]]$-equivariant. Therefore the elements $\{\bP_{i,-1} \mid  i=1,...,r\}$ we obtain are $G[[t]]$-invariant.
 
For $j > 0$ we define regular functions $\bP_{i,-j} \in \k[J_\infty \g]$ by
\begin{equation}
\label{eqp2}
\bP_{i,-j}:=\partial^{(j-1)}\bP_{i,-1}.
\end{equation}

We can define these regular functions on $J_\infty \g^*$ equivalently using fields. For $i=1,...,\dim \g$ we let $x^i(z)=\sum_{n < 0}x^{i}_{n}z^{-n-1}$. The change of variables $x^i \mapsto x^i(z)$ produces a map $\k[\g^*] \mapsto \k[J_\infty \g^*][z]$ and the image of $\bP_i$ under this map is the generating function for the series $(\bP_{i,-n-1})_{n \ge 0}$.

In the following lemma we write $x^1,...,x^n$ for a basis of $\g$.
  \begin{lemma}\label{L:rewriteders} 
For $P \in \k[\g^*]$ consider $P_{-1} \in \k[J_\infty \g^*]$ as above. Then
  $$\frac{\partial}{\partial x^{i}_{-1-s}} \partial^{(m)} P_{-1} =\begin{cases} \partial^{(m-s)}\frac{\partial P_{-1}}{\partial x^{i}_{-1}}, \qquad m\geq s\geq 0\, , \\
0 \, ,  \qquad\qquad\qquad\quad m< s\,  .    \end{cases} $$

 \end{lemma}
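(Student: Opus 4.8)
The plan is to prove the formula by direct computation, exploiting the explicit description of the higher derivation $\partial$ on $\k[J_\infty\g^*] = S(\g[t^{-1}]t^{-1})$ from Section~\ref{ss:formalseries}. Recall that $P_{-1} \in \k[J_\infty\g^*]$ is obtained from $P \in \k[\g^*]$ by the substitution $x^i \mapsto x^i_{-1}$, so if $P = P(x^1,\dots,x^n)$ then $P_{-1}$ is a polynomial in the variables $x^i_{-1}$ only. Since $\partial^{(j)} x^i_{-1-s} = \binom{-1-s}{j}(-1)^j x^i_{-1-s-j}$ (equivalently, up to sign, $\partial^{(j)}$ acts via the divided-power rule of \eqref{e:dividedpowerders} applied to $t^{-1-s}$, with the sign $(-1)^j$ noted after \eqref{e:gradedV}), the monomial $\partial^{(m)}P_{-1}$ is a sum of products of variables $x^{i}_{-1-a}$ with the subscripts summing to $m$ in the appropriate sense. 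The point is simply that the variable $x^i_{-1-s}$ can only appear in $\partial^{(m)}P_{-1}$ after at least $s$ applications of a single divided power have been ``spent'' lowering the index from $-1$ to $-1-s$; this immediately forces the $m < s$ case to vanish.

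First I would reduce to the case where $P_{-1}$ is a single monomial in the $x^i_{-1}$, by linearity of both sides in $P$. Then I would apply the Leibniz-type rule \eqref{e:HSrel} for $\partial^{(m)}$ to expand $\partial^{(m)}(x^{i_1}_{-1}\cdots x^{i_\ell}_{-1})$ as a sum over compositions $m = m_1 + \cdots + m_\ell$ of $\prod_k \partial^{(m_k)}x^{i_k}_{-1}$, and likewise expand $\partial^{(m-s)}\frac{\partial P_{-1}}{\partial x^i_{-1}}$. Applying $\frac{\partial}{\partial x^i_{-1-s}}$ to the first expansion picks out exactly those terms in which one factor equals $x^i_{-1-s}$, i.e. one $m_k$ equals $s$ with $i_k = i$; the remaining factors then form precisely a term of $\partial^{(m-s)}$ applied to $\frac{\partial P_{-1}}{\partial x^i_{-1}}$. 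Matching the combinatorial coefficients is the only real content: one must check that the binomial coefficient identity coming from $\partial^{(s)}x^i_{-1} = \pm x^i_{-1-s}$ (with the sign $(-1)^s$) combines correctly with the multiplicities produced by repeated occurrences of $x^i_{-1}$ in the monomial and by the $\partial$-relation $\partial^{(i)}\partial^{(j)} = \binom{i+j}{i}\partial^{(i+j)}$.

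An alternative, slicker route I would consider is to use the field/generating-function description: encode the statement as an identity of formal series. Writing $P(z) := P(x^1(z),\dots,x^n(z))$ where $x^i(z) = \sum_{n<0} x^i_n z^{-n-1}$, we have $P(z) = \sum_{m\ge 0} \partial^{(m)}P_{-1}\, z^m$ up to reindexing, and $\frac{\partial}{\partial x^i_{-1-s}}$ acts on $P(z)$ by extracting the coefficient of $z^s$ in $\frac{\partial P}{\partial x^i}(z)$, because $\frac{\partial x^j(z)}{\partial x^i_{-1-s}} = \delta_{ij} z^s$. This makes the whole lemma the chain rule for the substitution $x^i \mapsto x^i(z)$, read off coefficient-by-coefficient in $z$; the vanishing for $m < s$ is then the statement that $\frac{\partial P}{\partial x^i}(z)$ has no negative powers of $z$.

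The main obstacle is bookkeeping the signs and divided-power coefficients consistently: the convention $\partial^{(i)} = (-1)^i \partial_t^{(i)}$ together with $\partial^{(i)}\partial^{(j)} = \binom{i+j}{i}\partial^{(i+j)}$ means that one cannot naively treat $\partial^{(m)}$ as $\frac{1}{m!}\partial^m$ (which would fail in characteristic $p$), so the proof of the coefficient-matching must be done using the Hasse--Schmidt relations directly rather than by dividing by factorials. Once the generating-function reformulation is in place, though, this becomes transparent and the $m<s$ case is automatic. I would therefore write the proof via the generating-function/field description, with a one-line remark reducing to monomials if a reader prefers the combinatorial version.
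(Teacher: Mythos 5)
Your proposal is correct, and the generating-function route you settle on is genuinely different from the paper's argument. The paper proves the lemma by induction on the total degree of a monomial $P$: the case $m<s$ is dispatched by noting $\partial^{(m)}P_{-1}$ involves only $x^j_{-1},\dots,x^j_{-1-m}$, and for $m\ge s$ it writes $P=PQ$ with factors of smaller degree, expands both sides with the Hasse--Schmidt Leibniz rule \eqref{e:HSrel}, shifts indices, and invokes the inductive hypothesis. Your argument instead packages the higher derivation as the algebra homomorphism $E_z\colon f\mapsto\sum_{m\ge0}(\partial^{(m)}f)\,z^m$ (the homomorphism property is precisely \eqref{e:HSrel}), so that $E_z(P_{-1})=P(x^1(z),\dots,x^n(z))$ -- an identity the paper itself states when describing the generating function of $(P_{i,-n-1})_{n\ge0}$ -- and then applies the characteristic-free polynomial chain rule together with $\frac{\partial x^j(z)}{\partial x^i_{-1-s}}=\delta_{ij}z^s$, reading off coefficients of $z^m$; both cases of the lemma, including the vanishing for $m<s$, drop out at once. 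This buys a shorter, induction-free proof in which no binomial bookkeeping is needed, at the cost of making explicit the (easy) facts that $E_z$ is a ring map and that the formal chain rule holds over $\k$; the paper's induction is more pedestrian but stays entirely inside the Leibniz identity. Two small corrections to your write-up: with the paper's conventions $\partial^{(s)}x^i_{-1}=x^i_{-1-s}$ with no sign, since $(-1)^s\binom{-1}{s}=1$ (more generally $\partial^{(j)}x^i_{-1-s}=\binom{s+j}{j}x^i_{-1-s-j}$), so the sign anxiety in your first sketch is unnecessary; and the phrase ``$\frac{\partial}{\partial x^i_{-1-s}}$ acts by extracting the coefficient of $z^s$'' should read that it multiplies $\frac{\partial P}{\partial x^i}(z)$ by $z^s$, after which one compares coefficients of $z^m$.
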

 \begin{proof}
It suffices to check the claim for monomials in $x^1,...,x^n$. The case $m < s$ is clear because $\partial^{(m)} P_{-1}$ only depends on variables $x^j_{-1},...,x^j_{-1-m}$ with $j=1,...,\dim\g$.

We prove the statement for $m \ge s$ by induction on the total degree of $P$. The degree 1 case is easily verified. Now suppose it has been verified for all $P$ of degree less than $d$. Any monomial of degree greater than 1 can be written as a product $PQ$ of monomials of degrees strictly less than $d$.

Using \eqref{e:HSrel} have
\begin{eqnarray*}
\partial^{(m-s)} \frac{\partial}{\partial x^i_{-1}}(P_{-1} Q_{-1}) & = &
\sum_{j=0}^{m-s} \big( (\partial^{(j)} \frac{\partial P_{-1}}{\partial x^i_{-1}})(\partial^{(m-s-j)} Q_{-1}) + (\partial^{(j)} P_{-1})(\partial^{(m-s-j)} \frac{\partial Q_{-1}}{\partial x^i_{-1}})\big)\\
& = &\sum_{j=s}^m (\partial^{(j-s)} \frac{\partial P_{-1}}{\partial x^i_{-1}})(\partial^{(m-j)} Q_{-1)}) + \sum_{j=0}^{m-s} (\partial^{(j)} P_{-1})(\partial^{(m-s-j)} \frac{\partial Q_{-1}}{\partial x^i_{-1}}).
\end{eqnarray*}
In the second line we shift the indexes for half of the summands.

On the other hand we have
\begin{eqnarray*}
\frac{\partial}{\partial x^i_{-1-s}} \partial^{(m)}(P_{-1} Q_{-1}) & = &
\sum_{j=0}^m \big( (\frac{\partial}{\partial x^i_{-1-s}} \partial^{(j)} P_{-1}) (\partial^{(m-j)} Q_{-1}) + (\partial^{(j)} P_{-1})(\frac{\partial}{\partial x^i_{-1-s}} \partial^{(m-j)} Q_{-1})\\
& =& \sum_{j=s}^m (\frac{\partial}{\partial x^i_{-1-s}}\partial^{(j)} P_{-1})(\partial^{(m-j)} Q_{-1}) + \sum_{j=0}^{m-s} (\partial^{(j)}P_{-1}) (\frac{\partial}{\partial x^i_{-1-s}} \partial^{(m-j)} Q_{-1})
\end{eqnarray*}
where the second line uses the vanishing statement of the lemma for $s<m$. Since the degree of both $P$ and $Q$ is less than the degree of $PQ$ the proof concludes using the inductive hypothesis.
\end{proof}

Over the complex numbers the $G[[t]]$-invariance of the polynomials $P_{i,-j}$ is equivalent to the $\g[[t]]$-invariance. In our setting this is no longer the case, and so we check the invariance.

\begin{lemma}
\label{L:PVSinvariants}
$\bP_{i,-j} \in \k[J_\infty \g^*]^{G[[t]]}$.
\end{lemma}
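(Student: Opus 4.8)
The plan is to reduce the $\g[[t]]$-invariance of the functions $\bP_{i,-j}$ to the already-known $G$-invariance of the $\bP_i \in \k[\g^*]^G$ by a modular-reduction argument, exploiting the integral forms set up in Section~\ref{ss:integralforms} and the explicit description of the $\g[[t]]$-action on $\k[J_\infty\g^*] = S(\g[t^{-1}]t^{-1})$ coming from \eqref{e:gradedV}. First I would use Lemma~\ref{L:generateslemma} / Lemma~\ref{L:Gschemegenerators}(2), which tells us that $J_\infty G$ is generated by the one-parameter subgroups $J_\infty u_\alpha$, $\alpha \in \Phi$; hence it suffices to check that each $\bP_{i,-j}$ is fixed by every $J_\infty u_\alpha$, equivalently annihilated by the corresponding copies of $\g_\alpha[[t]]$ acting locally nilpotently via exponentials. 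Because the $\bP_{i,-j}$ are obtained from $\bP_{i,-1}$ by applying the higher derivation $\partial$, and $\partial$ commutes with the $G[[t]]$-action (the translation operators preserve the filtration and the $G[[t]]$-action, as recorded just after \eqref{e:gradedV}), it is enough to treat $j=1$: once $\bP_{i,-1}$ is $G[[t]]$-invariant, so is each $\partial^{(j-1)}\bP_{i,-1} = \bP_{i,-j}$.

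For the case $j=1$, the point is that $\bP_{i,-1}$ is the image of $\bP_i$ under the $G[[t]]$-equivariant inclusion $\k[\g^*] \hookrightarrow \k[J_\infty\g^*]$, $x \mapsto x_{-1}$, with $G[[t]]$ acting on $\k[\g^*]$ through the quotient $G[[t]] \twoheadrightarrow G$ (this equivariance is asserted in the paragraph preceding \eqref{eqp2}). Since $\bP_i \in \k[\g^*]^G$ by Theorem~\ref{th1}(1), it is automatically fixed by $G$, hence $\bP_{i,-1}$ is fixed by $G[[t]]$. Strictly speaking one must justify the claimed equivariance of $x \mapsto x_{-1}$: the action of $G[[t]]$ on $\gr V^k(\g)$ is the one induced from the adjoint action on $\g(\!(t)\!)/\g[[t]]$, and the constant-term copy $\g\cdot t^{-1}$ sits inside this as a $G[[t]]$-stable-modulo-higher-order subspace on which $G[[t]]$ acts through $G$; this is exactly the content of the identification \eqref{e:gradedV} together with the description of the action there. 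So the substance of the proof is organising these already-established facts.

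Alternatively, and perhaps cleaner to write, I would check $\g[[t]]$-invariance directly and then invoke the integral form $V^{-h^\vee}(\g_R)$ from Section~\ref{ss:integralforms}: over $\C$ the analogous functions $\bP_{i,-j}^{\C} \in \k[J_\infty\g_\C^*]$ are $G_\C[[t]]$-invariant (this is classical; see \cite{Fr}), in particular $\g_\C[[t]]$-invariant, and since Molev-type generators can be chosen inside the $R$-lattice $S(\g_R[t^{-1}]t^{-1})$, applying $- \otimes_R \k$ transports the identity $(x_\alpha t^n)\cdot \bP_{i,-j} = 0$ from $\g_\C[[t]]$ to $\g[[t]]$. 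This works because the $\g[[t]]$-module structure on $S(\g[t^{-1}]t^{-1})$ is the reduction mod $p$ of that on $S(\g_R[t^{-1}]t^{-1})$, and the invariance equations are polynomial with $R$-coefficients. One must be slightly careful that the chosen $\bP_i \in \k[\g^*]^G$ really do arise by reduction from $R$-defined invariants — but under $p>h$ the standard Chevalley generators (or their images from $\k[\g^*]^G$) can be taken $R$-rational, so this is fine.

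The main obstacle, in either route, is not conceptual depth but bookkeeping: pinning down precisely in which sense the inclusion $\k[\g^*]\hookrightarrow\k[J_\infty\g^*]$, $x\mapsto x_{-1}$, is $G[[t]]$-equivariant for the geometric action on the arc space — i.e. verifying that the degeneration action of $G[[t]]$ on $\gr V^k(\g)=S(\g[t^{-1}]t^{-1})$ restricts on the bottom layer $\g\cdot t^{-1}$ to the adjoint action of $G$ through $G[[t]]\twoheadrightarrow G$. Once that compatibility is nailed down (via \eqref{e:gradedV} and the remarks following it), the rest is immediate: $\partial$-covariance of the $G[[t]]$-action handles all $j\ge 1$, and Theorem~\ref{th1}(1) supplies the base case $j=1$. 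I would therefore spend the bulk of the write-up on that single equivariance point and keep the rest to a couple of lines.
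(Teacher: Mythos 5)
Both of your routes founder on the same point, and it is precisely the one the paper flags in the sentence immediately before the lemma. In your first route, the claim that the higher derivation $\partial$ commutes with the $G[[t]]$-action is false: the remark after \eqref{e:gradedV} only says that $G[[t]]$, $\g[[t]]$ and the operators $T^{(i)}$ each preserve the filtration, not that they commute with one another. On $\gr V^k(\g)=\k[J_\infty\g^*]$ one computes $[\partial^{(1)},x_n]=-n\,x_{n-1}$ (up to the sign conventions of Section~\ref{ss:formalseries}), so $\partial$ commutes only with the constant subalgebra $\g\otimes 1$, not with the higher congruence directions. Consequently the $G[[t]]$-invariance of $\bP_{i,-1}$ (which is indeed immediate from the equivariance of $x\mapsto x_{-1}$ and is recorded in the paper just before \eqref{eqp2}) does not propagate for free to $\partial^{(j-1)}\bP_{i,-1}$; if it did, the corresponding statement over $\C$ (Frenkel's Theorem~3.4.2 in \cite{Fr}) would be a triviality, which it is not. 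So the reduction ``it is enough to treat $j=1$'' is exactly the missing content of the lemma.

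Your second route is much closer to the paper's actual proof (integral forms from Section~\ref{ss:integralforms} plus reduction from $\C$), but it transports the wrong identities. Reducing $(x\otimes t^n)\cdot\bP^{\C}_{i,-j}=0$ modulo $p$ only yields $\g[[t]]$-invariance, and in characteristic $p$ this is strictly weaker than $G[[t]]$-invariance: by Theorem~\ref{T:PVAcentre} the algebra $\k[J_\infty\g^*]^{\g[[t]]}$ contains all $p$-th powers, which are certainly not all $G[[t]]$-invariant. For the same reason your phrase ``equivalently annihilated by $\g_\alpha[[t]]$ acting via exponentials'' is not valid here: for $\G_a$ acting on $\k[x]$ by translation, $x^p$ is killed by the Lie algebra but not fixed by the group, and there is no exponential in characteristic $p$. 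The paper's proof avoids this by reducing the \emph{group-level} identities: for each root one-parameter subgroup $u_{\alpha,m}$, the invariance $u^{\C}_{\alpha,m}(z)\cdot\bP^{\C}_{i,-j}=\bP^{\C}_{i,-j}$ is a polynomial identity in the formal variable $z$ with coefficients in the $R$-form $S(\g_R[t^{-1}]t^{-1})$, hence specialises modulo $p$ to an identity valid for all $z\in\k$; since $J_\infty G$ is generated by the $J_\infty u_\alpha$ (Lemma~\ref{L:Gschemegenerators}(2)), this gives $G[[t]]$-invariance. Your write-up would be repaired by making exactly this substitution: reduce the action of the subgroups $u_{\alpha,m}$ rather than the action of $\g[[t]]$, and drop the claimed commutation of $\partial$ with $G[[t]]$.
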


\begin{proof}
The coadjoint action $G\times \g^{*}\rightarrow \g^{*}$ is given by $\mu:\k[\lieg^{*}]\rightarrow \k[\lieg^{*}]\otimes \k[G]$, we define $\iota:\k[\lieg^{*}]\rightarrow \k[\lieg^{*}]\otimes \k[G]$ by $\iota(f)=f\otimes 1$ for $f\in \k[\lieg^{*}]$. Then, $\k[\g^{*}]^{G}=\Ker (\mu-\iota)$. Moreover, $\k[J_{\infty}\g^{*}]^{J_{\infty}G}=\Ker (J_{\infty}\mu-J_{\infty}\iota)$ and $\bP_{i,-j-1}=\partial^{(j)}P_{i,-1}$ then $(J_{\infty}\mu-J_{\infty}\iota)\bP_{i,-j-1}=\partial^{(j)} (J_{\infty}\mu-J_{\infty}\iota)\bP_{i,-1}=0$. Note that $J_\infty\mu$ and $J_\infty \iota$ commute with $\partial$ since $J_\infty$ is a functor to differential algebras.

\end{proof}

We provide one example to show that the invariance of $P_{i,-j}$ can be derived directly in special cases.
\begin{example}
Let $G = \SL_{n+1}$ so that
\[G[[t]]=\{g\in \operatorname{Mat}_{n+1}(\k[[t]]))  \mid \det (g)=1\} \]
Let $\{x^{i,j} \mid 1\le i,j\le n+1\}$ be the standard basis for $\operatorname{Mat}_{n+1}(\k)$. Define a matrix $X \in \operatorname{Mat}_{n+1}(\k[t^{-1}]t^{-1}[[z]])$ whose $i,j$ entry is the series $x^{i,j}(z) = \sum_{m < 0} x^{i,j}_m z^{-1-m}$. Write
$$\det(\lambda - X) = \lambda^n + \sum_{i=0}^n P_{i}(z) \lambda^i$$
We identify $\operatorname{Mat}_{n+1}(\k[t^{-1}]t^{-1})$ with $\Hom_\k(\operatorname{Mat}_{n+1}(\k[[t]]), \k)$ via the residue pairing. Then the $z^{j}$ coefficient of the series $P_i(z)$ restricts to $P_{i,-1-j}$ on $J_\infty \sl_{n+1}\subseteq \operatorname{Mat}_{n+1}(\k[[t]])$.

The group $G[[t]]$ acts on $\operatorname{Mat}_{n+1}(\k[t^{-1}]t^{-1}) = J_\infty \gl_{n+1}^*$ and the invariance of the $P_{i,-j}$ is now a simple consequence of the fact that the characteristic polynomial is $G[[t]]$-invariant.
\end{example}

Finally, we prove the main theorem of this section, which is a vertex algebra analogue of Theorem~\ref{th1}(1)-(4).

\begin{theorem}
\label{T:PVAcentre}
\begin{enumerate}
\setlength{\itemsep}{4pt}
\item $\k[J_\infty \g^*]^{G[t]]} = \k[P_{i,-j} \mid i=1,..,r, \ j < 0]$ polynomial ring on infinitely many generators.
\item $\k[J_\infty \g^*]^{\g[[t]]}$ is generated by $\k[J_\infty \g^*]^{G[[t]]}$ and $\k[J_\infty \g^*]^p$.
\item $\k[J_\infty \g^*]^{\g[[t]]}$ is a free $\k[J_\infty \g^*]^p$-module with basis
\begin{eqnarray}
\label{e:jetsrestrictedbasis}
\Big\{\prod_{i,j} P_{i,-j}^{k_{i,j}} \mid 0\le k_{i,j} < p, \text{ finitely many nonzero}\Big\}.
\end{eqnarray}
\item $\k[J_\infty \g^*]^{\g[[t]]}$ is isomorphic to the tensor product of $\k[J_\infty \g^*]^p$ and $\k[J_\infty \g^*]^{G[[t]]}$ over the intersection.
\end{enumerate}
\end{theorem}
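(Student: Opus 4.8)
The plan is to reduce everything to finite jet level, establish (1) by a transverse‑slice argument in the spirit of \cite{Fr}, and then deduce (2)--(4) from a general structure theorem of Skryabin \cite{S} applied to the truncated current algebra. Write $\bar\kappa\colon\g\to\g^*$ for the isomorphism induced by $\kappa$, fix a regular nilpotent $e\in\g$ with a good transverse slice $\S=\bar\kappa(e+\v)$ ($\dim\v=r$), and recall (using $p>h$, which gives $[\g,e]+\v=\g$) that the $P_i$ may be chosen so that restriction induces an isomorphism $\k[\g^*]^G\xrightarrow{\ \sim\ }\k[\S]$ sending $P_i$ to a linear coordinate $z_i$ on $\S$.

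\textbf{Reduction to finite level.} Since $\k[J_\infty\g^*]=\varinjlim_m\k[J_m\g^*]$ and every $\g[[t]]$- or $G[[t]]$-invariant element lies in some $\k[J_m\g^*]$, on which both actions factor through $\g_m:=\g\otimes\k[t]/(t^{m+1})=\Lie(J_mG)$ and $G_m:=J_mG$ respectively, it suffices to prove the four assertions with $\k[J_\infty\g^*],G[[t]],\g[[t]]$ replaced by $\k[J_m\g^*],G_m,\g_m$ and the generating family by $\{P_{i,-j}\mid 1\le i\le r,\ 1\le j\le m+1\}$, and then pass to the colimit; here $\g_m$ is a restricted Lie algebra of dimension $(m+1)\dim\g$ and $G_m$ is connected, smooth, with $\Lie G_m=\g_m$. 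As the level-$m$ bases embed compatibly into the level-$(m+1)$ ones, the module statements survive the limit.

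\textbf{Part (1) at finite level.} Jetting the slice, $\S_m:=J_m\S$ is a closed \emph{differential} subscheme of $J_m\g^*$ with $\k[\S_m]$ the polynomial ring on the jet coordinates $z_{i,k}$ ($1\le i\le r$, $0\le k\le m$), and functoriality of the divided-power operators gives $P_{i,-j}|_{\S_m}=\partial^{(j-1)}(z_{i,0})=z_{i,j-1}$. The differential of the action morphism $\psi\colon G_m\times\S_m\to J_m\g^*$ at $(\mathrm{id},\chi)$, $\chi:=\bar\kappa(e)$, is $(\xi,v)\mapsto\ad^*(\xi)\chi+v$, with image $\bar\kappa([\g,e]+\v)\otimes\k[t]/(t^{m+1})=\g^*\otimes\k[t]/(t^{m+1})$; thus $\psi$ is dominant and generically smooth. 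Hence restriction $\k[J_m\g^*]^{G_m}\hookrightarrow\k[\S_m]$ is injective, and since the invariant subalgebra $\k[P_{i,-j}]$ (invariance by Lemma~\ref{L:PVSinvariants}) already surjects onto $\k[\S_m]$, this map is an isomorphism; in particular the $P_{i,-j}$ are algebraically independent and $\k[J_m\g^*]^{G_m}=\k[P_{i,-j}]$. Taking the colimit gives (1).

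\textbf{Parts (2)--(4), and the main obstacle.} Apply Skryabin's theorem \cite{S} to the restricted Lie algebra $\g_m$ acting by derivations on the normal domain $A:=\k[J_m\g^*]$, with the polynomial subalgebra $B:=A^{G_m}=\k[P_{i,-j}]\subseteq A^{\g_m}$. Generic smoothness of $\psi$ shows the $P_{i,-j}$ have generically independent differentials, whence $[\Frac(B)\cdot\Frac(A^p):\Frac(A^p)]=p^{(m+1)r}$; on the other hand, because the $\Frac(A)$-span of $\g_m$ inside $\Der_\k(\Frac(A))$ is a restricted subalgebra, Jacobson--Bourbaki gives $[\Frac(A):\Frac(A)^{\g_m}]=p^{\dim\g_m-\ind(\g_m)}$. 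Feeding in $\ind(\g_m)=(m+1)r$ makes the two degrees match, so $\Frac(A)^{\g_m}=\Frac(B)\cdot\Frac(A^p)$ and Skryabin's theorem applies: $A^{\g_m}$ is generated by $A^{G_m}$ and $A^p$, is free over $A^p$ on the restricted monomials $\prod P_{i,-j}^{k_{i,j}}$ ($0\le k_{i,j}<p$), and equals $A^{G_m}\otimes_{(A^p)^{G_m}}A^p$. Passing to the colimit over $m$ yields (2)--(4). The main obstacle is the numerical input $\ind(\g\otimes\k[t]/(t^{m+1}))=(m+1)r$ --- the modular analogue of Ra{\"\i}s--Tauvel, together with whatever generic smoothness of coadjoint stabilisers the precise statement of Skryabin's theorem requires --- which one checks by studying jets into the dense smooth regular sheet of $\g^*$, or by reduction modulo $p$; a minor subsidiary point is the compatibility of $\partial^{(\bullet)}$ with restriction to $\S_m$ used above.
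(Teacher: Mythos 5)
Your overall architecture (reduce to the finite jet scheme $J_m\g^*$, then invoke Skryabin's theorem for the restricted Lie algebra $\g_m$) is the same as the paper's, and your proof of part (1) is a genuinely different and viable route: you prove (1) directly by restricting invariants to the jet scheme $J_m\S$ of a good transverse slice, \`a la Frenkel, using the modular Kostant section $\k[\g^*]^G \isoto \k[\S]$ (valid for $p>h$ by Veldkamp/Riche), whereas the paper deduces (1) \emph{from} (2) and (3) by expanding a $G_m$-invariant in the free $\k[J_m\g^*]^p$-basis and extracting $p$-th roots of the coefficients. Your route buys independence of (1) from Skryabin's theorem at the cost of needing the slice isomorphism and the compatibility of $\partial^{(\bullet)}$ with restriction to $\S_m$; the paper's route avoids the slice entirely.

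However, there is a genuine gap in your treatment of (2)--(4). Skryabin's Theorem~5.4 requires two inputs: the index formula $\ind(\g_m)=(m+1)r$ \emph{and} the condition $\codim_{\g_m^*}\bigl(\g_m^*\setminus J(P_{i,-j})\bigr)\ge 2$, where $J(P_{i,-j})$ is the locus where the differentials $d_\chi P_{i,-j}$ are linearly independent. Your Jacobson--Bourbaki degree count only establishes that the $P_{i,-j}$ have \emph{generically} independent differentials, i.e.\ that the complement of $J(P_{i,-j})$ has codimension $\ge 1$, and hence only yields the equality $\Frac(A)^{\g_m}=\Frac(B)\cdot\Frac(A^p)$ at the level of fraction fields. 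That birational statement does not by itself give freeness of $A^{\g_m}$ over $A^p$ on the restricted monomials, nor the ring-level equality $A^{\g_m}=B\cdot A^p$; the codimension-$2$ condition is precisely what Skryabin needs to pass from the generic statement to the global one, and you flag it only as ``whatever generic smoothness \dots the precise statement requires'' without verifying it. The paper's verification is real work: using Lemma~\ref{L:rewriteders} one shows the Jacobian matrix of the $P_{i,-j}$ is block lower triangular with all diagonal blocks equal to $\bigl(\partial P_{i,-1}/\partial x^j_{-1}\bigr)$, so it has full rank on the set $\g_m^\circ$ of jets with regular constant term, and then \cite[Proposition~3.2]{BG} gives $\codim_{\g_m}(\g_m\setminus\g_m^\circ)=3\ge 2$. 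A secondary (smaller) gap: the index computation $\ind(\g_m)=(m+1)r$ is only gestured at; reduction modulo $p$ is not a safe argument here since the index can jump under specialisation, and the paper instead argues directly that the $G_m$-saturation of $\h_m^\circ=\{\sum h_it^i\mid h_0\in\h^{\reg}\}$ is dense in $\g_m$ with all stabilisers there equal to $\h_m$.
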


\begin{proof} 
In the proof we make use of the group of $m$-jets $J_m G$ which has Lie algebra $\g_m := \g \otimes \k[t]/(t^m+1)$ and $\k$-points $G_m$, known as the truncated current group.

The map $\g^*[[t]] \onto \g_m^*$ gives rise to an inclusion $\k[\g_m^*] \to \k[J_\infty \g^*]$. 
This subalgebra is a $G[[t]]$-submodule and the action factors through the homomorphism $G[[t]] \onto G_m$ arising from the inclusion of Hopf algebras $J_m\k[G] \into J_\infty \k[G]$ (see \eqref{e:JHopfstructure}). Now in order to prove the theorem it suffices to prove the corresponding claims about $\k[J_m\g^*]^{G_m}$ and $\k[J_m\g^*]^{\g_m}$, mutatis mutandis.

In order to prove (2) and (3), we apply a result of Skryabin \cite[Theorem~5.4]{S}. To satisfy the conditions of his theorem we must demonstrate that:
\begin{enumerate}
\item[i)]  $\ind (\lieg_{m}) =(m+1) \rank(\g)$
\item[ii)]  $\codim_{\lieg_{m}^*} (\lieg_{m}^*\setminus J(P_{i,-j}))\geq  2$
\end{enumerate}
where $\ind(\g_m) := \min_{\chi \in \g^*} \dim \g_m^\chi$ is the index of the Lie algebra, and $J(P_{i,-j})$ denotes the Jacobian locus of the $P_{i,-j}$, i.e. the open set of $\g_m^*$ consisting of points $\chi$ such that the functions $d_\chi P_{i,-j}$ are linearly independent.

Part (i) is a special case of a theorem of Ra{\"i}s--Tauvel (where they work over the complex numbers) and we include a short proof valid in our setting. Pick a triangular decomposition $\g = \n^- \oplus \h \oplus\n^+$, which gives $\g_m = \n^-_m \oplus \h_m \oplus \n_m^+$. Let $\h_m^{\circ} = \{\sum_{i=0}^{m} h_i t^i \mid h_0 \in \h^{\reg}\}$.

The differential of the adjoint action $G_m \times \h_m \to \g_m$ at a point $(1, h)$ is the linear map $\g_m \times \h_m \to \g_m$ given by $x,z \mapsto [x, h] + z$. When $h\in \h_m^\circ$ we have that 
\begin{eqnarray}
\label{e:oneforlater}
[h, \g_m] = \n_m^- \oplus \n_m^+ & \text{ and } & \g_m^h = \h_m\, , 
\end{eqnarray} 
in more detail, $[h_0, \mathfrak{n}^{\pm}]=\mathfrak{n}^{\pm}$ then $[h, \mathfrak{n}_m^{\pm}]=\mathfrak{n}_m^{\pm}$ and easily $[h, \mathfrak{h}_m]=0$. 
And so the differential is surjective for such a point. Consequently the conjugates of $\h_m^{\circ}$ are dense in $\g_m$. On the other hand $\dim \g_m^h = (m+1)\rank(\g)$ by \eqref{e:oneforlater}, and so by upper-semicontuinuity of dimensions of fibres of morphisms, we have that $\min_{x\in \g_m} \dim \g_m^x = (m+1) \rank(\g)$. 

There is a $\g_m$-invariant bilinear form on $\g_m$ given by $x t^i , yt^j \mapsto \delta_{i, m+1-j}\kappa(x, y)$, and this induces an isomorphism $\theta: \g_m \to  \g_m^*$ of $\g_m$-modules. We deduce that $\ind(\g_m) = (m+1) \rank(\g)$, which proves (i).

Let $\g_m^\circ = \{ \sum_{i=0}^m x_i t^i \mid x_0 \in \g^{\reg}\}$. Thanks to \cite[Proposition~3.2]{BG} we have $\codim_{\g}(\g \setminus \g^\circ) = 3$ hence we have $\codim_{\g_m}(\g_m \setminus \g_m^\circ) = 3$. We shall show that $\theta(\g_m^\circ) \subseteq J(P_{i,-j})$, which shall prove (ii). 

Once again we write $x^1,...,x^n$ for a basis of linear functions on $\g^*$, which gives a basis $\{x^i_{-j} \mid i=1,...,r, \ j=1,...,m+1\}$ of linear functions on $J_m \g^*$. Consider the Jacobian of the functions $\{P_{i,-j} \mid 1\le i \le r, \ 1 \le j \le m+1\}$, a matrix of size $(m+1)\dim\g \times (m+1) \rank\g$. It has block form
\begin{equation}\label{eq3.13}
\left(\begin{matrix} 
   \frac{\partial P_{i,-1}}{\partial x^{j}_{-1}}   &  &  & 0 \\
     \frac{\partial P_{i,-2}}{\partial x^{j}_{-1}}  & \frac{\partial P_{i,-2}}{\partial x^{j}_{-2}} &  \\
    \vdots & &\ddots  &  \\
     \frac{\partial P_{i,-1-m}}{\partial x^{j}_{-1}}  & \frac{\partial P_{i,-1-m}}{\partial x^{j}_{-2}} & \dots         & \frac{\partial P_{i,-m-1}}{\partial x^{j}_{-m-1}} 
    \end{matrix}\right)
    \end{equation}
where each block has size $(\dim \g)  \times (\rank \g)$. Using Lemma~\ref{L:rewriteders} we see that the diagonal blocks are all equal to $(\frac{\partial P_{i,-1}}{{\partial x^{j}_{-1}}})$. Evaluating at any point of $\g_m^\circ$ we see that this matrix has full rank, thanks to the differential criterion for regularity of $\g$ (see Lemma~3.4 and Proposition~3.4 of \cite{BG}). We have now shown that $\theta(\g_m^\circ) \subseteq J(P_{i,-j})$, which completes the proof of (ii), and part (2) and (3) of the current theorem.
    
For part (1)  it suffices to show that $\k[J_m\g^*]^{G_m} = \k[P_{i,-j} \mid i=1,...,r, \ -m-1 \le -j < 0]$ is a polynomial ring. Suppose that $f\in \k[J_m\g^*]^{G_m}$. Then by part (1) we can write $f$ uniquely as $f = \sum_{b\in B} g_b b$ where $B$ denotes the collection of monomials described in \eqref{e:jetsrestrictedbasis} (with all $j \le m+1$) and $g_b \in \k[J_m\g^*]^p$. By uniqueness, and using Lemma~\ref{L:PVSinvariants}, we see that each $g_b$ is $G_m$-invariant. Since $\k[J_m\g^*]$ has no nilpotent elements it follows that $g_b = (\bar g_b)^p$ for some $\bar g_b \in \k[J_m\g^*]^{G_m}$. Now part (1) may be proven by a downward induction on total degree.

Part (4) follows from \cite[Lemma~2.1]{GT}, whose proof does not require the free basis to be finite.
\end{proof}

\section{Centre of $V^{k}(\g)$ at the critical level}
\label{S:criticalcentre}

\subsection{Integral forms on vertex algebras}
\label{ss:integralforms}

Let $G_\Z$ be a connected, split simple group such that the base change of $G_\Z$ to $\k$ is equal to $G$. Also let $T_\Z$ be a choice of split maximal torus whose base change to $\k$ is $T$.

Let $R$ be the subring of $\Q$ generated by the reciprocals of integers between 1 and $h$. Since $p > h$ there is a natural homomorphism $R \to \k$. Write $G_\C$ for the base change to $\C$, write $\g_\C = \Lie G_\C$. Similarly we write $G_R$ and $\g_R$ for the $R$-defined objects. Since $\g$ is simple for $p > h$ we can identify $\g_\Z \otimes_\Z \k = \g$.

Since $S(\g_R[t^{-1}]t^{-1})$ is a free $R$-module, it acts as an intermediary between the symmetric algebras $S(\g_\C[t^{-1}] t^{-1})$ and $ S(\g[t^{-1}]t^{-1})$. to be precise we have natural identifications
\begin{eqnarray}
S(\g_R[t^{-1}] t^{-1}) \otimes_R \C & = & S(\g_\C[t^{-1}] t^{-1}) \\
S(\g_R[t^{-1}]t^{-1})  \otimes_R \k & = & S(\g[t^{-1}]t^{-1}).
\end{eqnarray}

Now we address the modular reduction of the affine vertex algebra $V^{-h^\vee}(\g_\C)$. We let $\hg_R = \g_R \otimes_R R[t^{\pm 1}] \oplus R K$, which is a Lie $R$-subalgebra of $\hg_\C$, and a free $R$-module. The enveloping algebra $U(\hg_R)$ satisfies the conditions of the PBW theorem, and we can construct the vacuum module $V^{-h^\vee}(\g_R) := U(\hg_R) \otimes_{\hg_+, R} R_{-h^\vee}$ (Cf. \eqref{eq1.2}), where $\hg_{+,R}$ is the induced $R$-form on $\hg_+$ and $R_{-h^\vee}$ is the rank one free $R$-module with $\hg_{+,R}$-action given by letting $K$ act via $-h^\vee$ and $\g_R[[t]]$ acts trivially. This is a free $R$-module and so we have
\begin{eqnarray}
  V^{-h^\vee}(\g_R) \otimes_R \C& = & V^{-h^\vee}(\g_\C) \\
  V^{-h^\vee}(\g_R) \otimes_R \k & = & V^{-h^\vee}(\g).
\end{eqnarray}

\subsection{A criterion for the existence of Segal--Sugawara vectors}
\label{ss:criterion}

Recall that $R$ is the subring of $\Q$ generated by $\{1/n \mid 1\le n \le h\}$, and now let $Q \subseteq \Q$ be a finitely generated $R$-algebra. Retain the notation $G_\C, \g_\C, G_R, \g_R$ from Section~\ref{ss:integralforms}. In this section we discuss the centre of $V^{-h^\vee}(\g)$ at the critical level, and so we ease notation further by writing $V_A = V^{-h^\vee}(\g_A)$ whenever $A \in \{R, Q, \C, \k\}$. Similarly we write $S_A = S(\g_A[t^{-1}]t^{-1})$.

Recall that $\z V_\k$ is a commutative vertex algebra and, in particular, it is a commutative (differential) algebra.  To prove Theorem~\ref{T:maintheorem} we use the following criterion, which follows directly from Theorem~\ref{T:PVAcentre} by modular reduction and a standard filtration argument.

\begin{proposition}
\label{P:AWcriterion}
Suppose that there is a non-zero homomorphism $Q \to \k$ and that there exist elements $S_{i,-1} \in V_Q \cap \z V_\C$ such that
\begin{eqnarray}
\label{e:property}
\gr S_{i,-1} = P_{i,-1}
\end{eqnarray}
for $i=1,...,r$ . If $S_{i,-j} := T^{(j-1)} S_{i,-1}$ and denote the image of $S_{i,-j}$ under $V_Q \to V_\k$ by the same symbol. The following hold:
\begin{enumerate}
\setlength{\itemsep}{2pt}
\item $V_\k^{G[[t]]} = \k[S_{i,-j} \mid i=1,..,r, \ j > 0]$ is a polynomial algebra on infinitely many generators.
\item $\z V_\k$ is generated by $V_\k^{G[[t]]}$ and $\z_p V_\k$ as a commutative algebra.
\item $\z V_\k$ is a free $\z_p V_\k$-module of over $\z_p V_\k$ with basis
\begin{eqnarray}
\label{e:bigbasis}
\Big\{\prod_{i,j} S_{i,-j}^{k_{i,j}} \mid 0\le k_{i,j} < p, \text{ finitely many nonzero}\Big\}.
\end{eqnarray}
\item $\z V_\k$ is isomorphic to the tensor product of $\z_p V_\k$ and $V_\k^{G[t]]}$ over their intersection.
\end{enumerate}
\end{proposition}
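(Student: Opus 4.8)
The plan is to deduce (1)--(4) from their semiclassical analogues in Theorem~\ref{T:PVAcentre} by a filtered--graded argument, using modular reduction only to control the reductions $S_{i,-j}\bmod p$. First I would record the basic properties of $S_{i,-j}=T^{(j-1)}S_{i,-1}$. The divided-power translation operators act on PBW monomials by \eqref{e:trans1}--\eqref{e:trans2} with integer (binomial) coefficients, so they preserve the $Q$-lattice $V_Q\subseteq V_\C$; hence $S_{i,-j}\in V_Q$, and I keep the symbol $S_{i,-j}$ for its image under $V_Q\to V_\k$. Since $\gr$ carries $T^{(i)}$ to the divided power $\partial^{(i)}$ on $\gr V_\k=\k[J_\infty\g^*]$ and $\gr S_{i,-1}=P_{i,-1}$, the defining formula \eqref{eqp2} gives $\gr S_{i,-j}=\partial^{(j-1)}P_{i,-1}=P_{i,-j}$. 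Over $\C$, the hypothesis $S_{i,-1}\in\z V_\C$ and the fact that $\z V_\C$ is a differential subalgebra yield $S_{i,-j}\in\z V_\C=V_\C^{G_\C[[t]]}$, the last equality being the Feigin--Frenkel description of the complex centre. I then invoke the modular-reduction argument in the proof of Lemma~\ref{L:PVSinvariants}, with $S_{i,-j}$ in the role of $P_{i,-j}$: the $G_\C[[t]]$-invariance of $S_{i,-j}$, expressed through the root-subgroup jets generating $G[[t]]$ (Lemma~\ref{L:Gschemegenerators}), is a polynomial identity over $V_Q$ in a formal parameter, and reduces modulo $p$ to $G[[t]]$-invariance over $\k$; hence $S_{i,-j}\in V_\k^{G[[t]]}$, so $S_{i,-j}\in\z V_\k$ by Corollary~\ref{C:groupinvariantscentre}.

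For part~(1): the filtration on $V_\k$ is $G[[t]]$-stable, so $\gr V_\k^{G[[t]]}\subseteq(\gr V_\k)^{G[[t]]}=\k[J_\infty\g^*]^{G[[t]]}$, which equals $\k[P_{i,-j}]$ by Theorem~\ref{T:PVAcentre}(1); conversely $\k[S_{i,-j}]\subseteq V_\k^{G[[t]]}$ and, since $\gr S_{i,-j}=P_{i,-j}$, we get $\k[P_{i,-j}]\subseteq\gr\k[S_{i,-j}]\subseteq\gr V_\k^{G[[t]]}$. These inclusions force $\gr\k[S_{i,-j}]=\gr V_\k^{G[[t]]}=\k[P_{i,-j}]$, and as the filtration is exhaustive I conclude $V_\k^{G[[t]]}=\k[S_{i,-j}]$; the algebraic independence of the leading terms $P_{i,-j}$ then makes this a polynomial algebra on the $S_{i,-j}$.

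For parts~(2) and~(3): write $B$ for the set of restricted monomials $\prod_{i,j}S_{i,-j}^{k_{i,j}}$ with $0\le k_{i,j}<p$, all of which lie in $\z V_\k$ by part~(1) and Corollary~\ref{C:groupinvariantscentre}, while $\z_pV_\k\subseteq\z V_\k$ by Lemma~\ref{L:pcentre}. For the spanning claim, take $v\in\z V_\k$ with $v\in F_dV_\k$; by \eqref{e:lineq} its symbol lies in $\k[J_\infty\g^*]^{\g[[t]]}$, so Theorem~\ref{T:PVAcentre}(3) expresses it uniquely, and homogeneously, as $\sum_{b\in B}\bar c_b\,\gr b$ with $\bar c_b\in\k[J_\infty\g^*]^p=\gr\z_pV_\k$ (using \eqref{e:peq}); lifting each $\bar c_b$ to a homogeneous $c_b\in\z_pV_\k$, the difference $v-\sum_b c_b b$ lies in $\z V_\k\cap F_{d-1}V_\k$, and downward induction, terminating at $F_0\z V_\k=\k\vac$, gives $\z V_\k=\sum_{b\in B}\z_pV_\k\,b$. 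This yields (2) and the spanning half of (3). For linear independence, a hypothetical nontrivial relation $\sum_b c_b b=0$ over $\z_pV_\k$ would, because $\gr V_\k=\k[J_\infty\g^*]$ is an integral domain, pass to a nontrivial relation $\sum_b\gr(c_b)\,\gr(b)=0$ in $\k[J_\infty\g^*]$ in which the $\gr(c_b)$ lie in $\k[J_\infty\g^*]^p$ and the $\gr(b)$ are distinct restricted monomials in the $P_{i,-j}$, contradicting Theorem~\ref{T:PVAcentre}(3). Finally, granted (1)--(3), part~(4) follows from \cite[Lemma~2.1]{GT} exactly as in the proof of Theorem~\ref{T:PVAcentre}(4).

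I expect the first step to be the only genuine obstacle: transporting the centrality and $G[[t]]$-invariance of the $S_{i,-1}$ from $\C$ to $\k$ while keeping the symbol equal to $P_{i,-1}$. This is where the hypothesis $S_{i,-1}\in V_Q\cap\z V_\C$ and the Feigin--Frenkel theorem are used, and it rests on the modular-reduction set-up of Section~\ref{ss:integralforms} together with the argument of Lemma~\ref{L:PVSinvariants}. Everything afterwards is the formal filtered--graded dictionary, powered by Theorem~\ref{T:PVAcentre} and the integrality of $\gr V_\k$.
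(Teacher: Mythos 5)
Your proposal is correct and follows essentially the same route as the paper: transporting $G_\C[[t]]$-invariance of the $S_{i,-j}$ to $\k$ via the modular-reduction argument of Lemma~\ref{L:PVSinvariants}, then deducing (1)--(3) by the filtered--graded comparison with Theorem~\ref{T:PVAcentre} and (4) from \cite[Lemma~2.1]{GT}. The only difference is that you spell out the downward induction for the spanning step and the leading-term argument for independence, which the paper leaves as ``a very similar line of reasoning.''
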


\begin{proof}
Recall Lemma~\ref{lin}, Lemma~\ref{L:pcentre} and Corollary~\ref{C:groupinvariantscentre} which state that $\z_p V_\k, V_\k^{G[[t]]} \subseteq \z V_\k = V_\k^{\g[[t]]}$.

It is clear that $V_Q$ is a vertex $Q$-subalgebra of $V_\C$ and that the translation operators on $V_Q$ and $V_\k$ intertwine the natural map $V_Q \to V_\k$. Therefore $\gr S_{i, -j} = P_{i,-j}$ for all $i, j$. Since $\C$ has characteristic zero, $\g_\C[[t]]$-invariants are $G_\C[[t]]$-invariant, and so we have $S_{i,-j} \in V_\C^{G_\C[[t]]}$.

Now let $T$ be either $\C, Q$ or $\k$. By Lemma~\ref{L:Gschemegenerators} we know that $J_\infty G_T$ is generated by $\{J_\infty u_\alpha \mid \alpha\in \Phi\}$. The group of $T$-points of $J_\infty u_\alpha$ is isomorphic to the additive group $T[[t]]$. For each $m \ge 0$ and $\alpha \in \Phi$ we consider the morphism of schemes
$u_{\alpha, m}^T : \mathbb{G}_a \to J_\infty G_T$ determined by setting $u_{\alpha, m}^T(a) = J_\infty u_\alpha (a t^m)$, for any $T$-algebra $A$ and element $a\in A$. Thus we have a group homomorphism $u_{\alpha, m}^T(T) : T \to G_T[[t]]$ of $T$-points. If $f \in V_T$ then $u_{\alpha, m}(T)(z) \cdot f$ is a polynomial expression in $z$ for $z \in T$.

We now fix $i,j$ and $\alpha, m$. The $G_\C[[t]]$-invariance of $S_{i,-j}$ implies that $u_{\alpha, m}^\C(\C)(z) \cdot S_{i,-j}^\C - S_{i,-j}^\C$ vanishes identically for all values of $z\in \C$. We can regard this as a polynomial identity in $V_T[z]$ where $z$ is a formal variable. Reducing modulo $p$ we deduce that $u_{\alpha, m}^\k(\k)(z) \cdot S_{i,-j} - S_{i,-j} = 0$ in $V_T$. Since this holds for every $\alpha, m$, and since $G[[t]]$ is generated by the images of the morphisms $u_{\alpha, m}^\k(\k)$ we deduce that $S_{i,-j}$ is $G[[t]]$-invariant.

Using an argument identical to the proof of Lemma~\ref{L:PVSinvariants} we see that the images $S_{i,-j} \in V_\k$ are $G[[t]]$-invariant. Property \eqref{e:property} continues to hold in $V_\k$ and so we have $\k[S_{i,-j} \mid i,j] \subseteq V_\k^{G[[t]]}$. By Theorem~\ref{T:PVAcentre} we have $\gr \k[S_{i,-j} \mid i,j] \subseteq \gr V_\k^{G[[t]]} \subseteq S_\k^{G[[t]]} = \k[P_{i,-j} \mid i,j] = \gr \k[S_{i,-j} \mid i,j]$ and so we must have equality throughout. This proves (1).

Part (2) follows by a very similar line of reasoning, in view \eqref{e:lineq} and \eqref{e:peq}. Arguing in the same fashion once again, we see that the elements \eqref{e:bigbasis} span $\z V_\k$ as a $\z_p V_\k$-module. If there were a $\z_p V_\k$-linear relation amongst them the the projection into $\gr V_\k$ would give an $S_\k^p$-linear relation amongst $\{P_{i,-j} \mid i,j\}$, which contradicts Theorem~\ref{T:PVAcentre}(3). This proves (3).

Part (4) follows from \cite[Lemma~2.1]{GT}, whose proof does not require the free basis to be finite.
\end{proof}

In order to complete the proof of Theorem~\ref{T:maintheorem} we must show that the hypotheses of Proposition~\ref{P:AWcriterion} can be satisfied for each simple finite root datum.
 
\subsection{Completing the proof: type {\sf A}}
\label{ss:completingA}
Let $n \in \N$. The known formulas for generators of the Feigin--Frenkel centre in type {\sf A} are expressed inside the vertex algebra $V^{-h^\vee}(\gl_N(\C))$. The Coxeter number $h$ for $\SL_N$ is $N$. Since we have assumed that $p > h$ it follows that $\gl_N(R)$ is the direct sum (as Lie $R$-algebras) of $\sl_N(R)$ and the rank 1 centre $R I$.

The normalised Killing form on $\gl_N(R)$ is given by $\kappa(x,y) = \Tr(xy) - \frac{1}{N} \Tr(\ad(x) \ad(y))$. The kernel is the centre of $\gl_N(R)$ and the restriction to $\sl_N(R)$ is the normalised Killing form \eqref{e:normalisedKilling}.

Let $\cc_R = R I \otimes R((t)) \oplus R K \subseteq \widehat\gl_N(R)$ a commutative Lie subalgebra. Then the image $H_R$ of $U(\cc_R)$ in $V^{-h^\vee}(\gl_N(R))$ is a commutative vertex subalgebra, isomorphic to the semiclassical limit of the Heisenberg vertex algebra. Furthermore we have an $R$-linear isomorphism of vertex rings $V^{-h^\vee}(\gl_N(R)) \cong V^{-h^\vee}(\sl_N(R)) \otimes_R H_R$. Similar remarks hold over $\k$ or $\C$. The next lemma follows immediately from these remarks.

\begin{lemma}
Theorem~\ref{T:maintheorem} holds for $V^{-h^\vee}(\sl_N)$ if and only if it $V^{-h^\vee}(\gl_N)$. $\hfill\qed$
\end{lemma}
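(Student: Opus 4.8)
The plan is to establish the decomposition $V^{-h^\vee}(\gl_N(R)) \cong V^{-h^\vee}(\sl_N(R)) \otimes_R H_R$ as vertex $R$-algebras compatible with all the relevant structures, and then to observe that Theorem~\ref{T:maintheorem} is a statement about the centre, the $p$-centre, the current-group invariants and the divided-power translation operators, all of which respect the tensor decomposition. First I would unwind the claim that $\widehat{\gl}_N(R) = \widehat{\sl}_N(R) \oplus \cc_R$ as Lie $R$-algebras, where by abuse I mean that the two pieces commute and $K$ is identified diagonally; this uses that for $p>h=N$ the element $I = \mathrm{diag}(1,\dots,1)$ spans a complement to $\sl_N$ in $\gl_N$ over $R$, and that $\kappa$ restricted to $\sl_N(R)$ is the normalised Killing form while $\cc_R$ is isotropic modulo its pairing with itself through $K$. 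Then $U(\widehat{\gl}_N(R)) \cong U(\widehat{\sl}_N(R)) \otimes_R U(\cc_R)$ by PBW, the vacuum left ideals match up, and the induced vacuum modules decompose accordingly: $V^{-h^\vee}(\gl_N(R)) \cong V^{-h^\vee}(\sl_N(R)) \otimes_R H_R$ where $H_R$ is the image of $U(\cc_R)$. The vertex algebra structures are compatible because the state--field correspondence is built from normally ordered products of the generating fields, and the $\gl_N$-fields split as a sum of the $\sl_N$-fields and the Heisenberg field $I(z)$, which are mutually local and mutually commuting in the strong sense.

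Next I would record the structural consequences of this decomposition. The translation operators $T^{(k)}$ on $V^{-h^\vee}(\gl_N)$ are determined by their action on the generators and are compatible with the tensor product, so $(V^{-h^\vee}(\gl_N))^{\gl_N[[t]]} = (V^{-h^\vee}(\sl_N))^{\sl_N[[t]]} \otimes H_R$ — here one uses that $\gl_N[[t]]$ acts as $\sl_N[[t]]$ on the first factor and trivially on $H_R$ (since $\cc_R$ is central, $[\gl_N[[t]], \cc_R] = 0$), together with the fact that $H_R$ is an integral domain so no new invariants appear. Likewise the $p$-centre, being generated by $x^p - x^{[p]}$ for $x \in \widehat{\gl}_N$, splits as $\z_p V^{-h^\vee}(\sl_N) \otimes (\text{$p$-centre of } H_R)$, and $H_R$ itself is entirely "its own $p$-centre'' in the sense that it is a free module over its $p$th powers with the analogue of a restricted monomial basis coming from the jet-space presentation of the Heisenberg semiclassical limit; indeed $H_\k \cong \k[J_\infty \mathfrak{c}^*]$ where $\mathfrak{c}$ is the one-dimensional abelian Lie algebra, and for such a commutative (differential) polynomial algebra the four statements of Theorem~\ref{T:maintheorem} are elementary. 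The group-invariant statement is clean because $G = \SL_N$ acts trivially on $H_R$.

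I would then assemble the equivalence. If Theorem~\ref{T:maintheorem} holds for $V^{-h^\vee}(\sl_N)$, then tensoring the four assertions with $H_\k$ (viewed as the trivial extra tensor factor carrying its own, elementary, instance of the theorem) and using that tensor products of free modules are free, tensor products of polynomial algebras are polynomial, and $\z(A \otimes B) = \z A \otimes \z B$ for commutative vertex algebras, yields the four assertions for $V^{-h^\vee}(\gl_N)$; here the Segal--Sugawara generators for $\gl_N$ are the $\sl_N$ ones together with the $N$ generators $I_{-1}\vac, I_{-2}\vac, \dots$ of $H_\k$ (so $r$ increases by $1$ but remains finite, matching the rank of $\gl_N$). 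Conversely, given the theorem for $\gl_N$, one recovers it for $\sl_N$ by taking $\cc_\k$-coinvariants, or more concretely by noting that the subalgebra of $V^{-h^\vee}(\gl_N)$ generated by $\widehat{\sl}_N$ is exactly $V^{-h^\vee}(\sl_N) \otimes \vac$, and all four statements descend to a tensor factor which is a polynomial direct summand. The main obstacle I anticipate is bookkeeping rather than conceptual: one must check carefully that the vertex-algebra tensor decomposition is genuinely $R$-linear and survives base change to $\k$ (which it does because every module in sight is $R$-free, by the same PBW reasoning used in Section~\ref{ss:integralforms}), and that the normalisation of $\kappa$ on $\gl_N(R)$ stated in the excerpt is precisely the one making $-h^\vee$ the critical level on the $\sl_N$ factor while rendering $\cc_R$ genuinely central with the correct cocycle; granting these, the proof is essentially the formal observation that all the invariants in Theorem~\ref{T:maintheorem} are "tensor-decomposable.''
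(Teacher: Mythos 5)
Your proposal is correct and follows essentially the same route as the paper: the paper's proof consists precisely of the remarks preceding the lemma, namely that $\kappa$ on $\gl_N$ has kernel the centre (so the cocycle vanishes on $\cc_R$ and $H_R$ is a commutative, degenerate Heisenberg factor) and that PBW gives the vertex-ring decomposition $V^{-h^\vee}(\gl_N) \cong V^{-h^\vee}(\sl_N)\otimes H$, from which the centre, $p$-centre, current-group invariants and translation operators all decompose, exactly as you argue. Your elaborations (freeness over $R$, base change to $\k$, the elementary verification of the four statements on the Heisenberg factor) are just the details the paper leaves implicit in ``follows immediately from these remarks.''
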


Generators of $V^{-h^\vee}(\gl_N(\C))$ satisfying the hypotheses of Proposition~\ref{P:AWcriterion} are described \cite[Theorem~3.1]{CM} or \cite[Theorem~7.1.3]{Mo}. They are manifestly defined over $R = \Z[1/n \mid 1\le n \le N]$ and this completes the proof in type {\sf A}. 

\subsection{Completing the proof: classical types}
\label{ss:completingBCD}
Let $G$ be a connected, simply connected, simple classical group scheme of type {\sf B}, {\sf C}, or {\sf D}. Under the hypothesis $p > h$ there are no inseparable isogenies, and so it does no harm to replace $G$ by one of the matrix groups $\SO_N$ or $\Sp_N$.

In this case the hypotheses of Proposition~\ref{P:AWcriterion} are satisfied with $Q = R$, thanks to Theorems~8.1.6, 8.1.9 and 8.3.8 of \cite{Mo}. See also \cite{MoSS} and \cite{Ya}.

\subsection{Completing the proof: exceptional types}

Finally we let $G$ be a simply connected, connected exceptional simple group scheme, and $\g= \Lie(G)$. Recall the notation from Section~\ref{ss:criterion}, and $x^1,...,x^n$ denotes a basis for $\g$.

Fix $i =1,...,r$. For $m > 0$ let $V_\C^{[m]}$ be the span of PBW monomials \eqref{pbw} of degree less than $\deg P_{i,-1}$ involving $x^j_{-k}$ where $m\ge k \ge 1$ and $j = 1,...,n$. 

Since $\g_\C[[t]]$ is topologically generated by $\g_\C$ and $\g_\C\otimes t$, the existence of an element $S_{i,-1} \in (V_\C^{[m]})^{\g_\C[[t]]}$ such that $\gr S_{i,-1} = P_{i,-1}$ can be expressed as a solution to a finite system of affine linear equations with integer coefficients. The Feigin--Fenkel theorem (see \cite[Theorem~8.1.3]{Fr} implies that there exists $m > 0$ such that the system has a solution and, by integrality, the existence of a solution in $(V_\C^{[m]})^{\g_\C[[t]]}$ implies the existence of a solution $S_{i,-1} \in (V_\Q^{[m]})^{\g_\Q[[t]]}$. Writing $S_{i,-1}$ as a $\Q$-linear span of elements of the PBW basis, we can choose an integer $q_i \in \N$ (say, the maximal denominator appearing in the coefficients) so that $S_{i,-1} \in V_{R_i}$ where $R_i = \Z[q_i^{-1}]$. Taking $q = \max\{q_i\! \mid i=1,...,r\}$ and setting $Q := R[q^{-1}]$ we have $S_{i,-1} \in V_Q \cap V_\C^{\g_\C[[t]]}$ for all $i=1,...,r$.

Now provided the characteristic of the field $\k$ is greater than $q$ there is a nonzero homomorphism $Q \to \k$, and so the hypotheses of Proposition~\ref{P:AWcriterion} are satisfied. This completes the proof of Theorem~\ref{T:maintheorem}.

   \bibliographystyle{amsalpha}

\begin{thebibliography}{DHVW}

\bibitem[Ar]{Ar} {\sc T. Arakawa}, ``Introduction to W-algebras and their representation theory''. Perspectives in Lie theory, 179--250, Springer INdAM Ser., 19, Springer, Cham, 2017.

\bibitem[AF1]{AF1}
T. Arakawa and P. Fiebig, 
``On the restricted Verma modules at the critical level'', Trans.\ Amer.\ Math.\ Soc.\ 364 (2012), 4683--4712.

\bibitem[AF2]{AF2}
T. Arakawa and P. Fiebig, 
``The linkage principle for restricted critical level representations of affine Kac-Moody algebras'', 
Comp.\ Math.\ 148 (2012), 1787--1810.

\bibitem[AM]{AM}
Arakawa, T., Moreau, A.:
\textit{Arc spaces and vertex algebras} preprint

\bibitem[AW]{AW}
{\sc T. Arakawa, W. Wang}, ``Modular affine vertex algebras and baby Wakimoto modules. Lie algebras'', Lie superalgebras, vertex algebras and related topics, 1–15, Proc. Sympos. Pure Math., 92, Amer. Math. Soc., Providence, RI, 2016.

\bibitem[BD]{BD} {\sc A.~Beilinson, V.~Drinfeld}, Opers, {\tt arXiv:math/0501398}.


\bibitem[BMR]{BMR} R.~Bezrukavnikov, I.~Mirkovic and D.~Rumynin,
{\em Localization of modules for a semisimple Lie algebra in prime characteristic},
Ann.\ Math.\ {\bf 167} (2008), no.\ 3, 945--991.


\bibitem[Bo1]{Bo1}
{\sc R. Borcherds}, PINAS Vertex algebras.


\bibitem[BR]{BR}  R.E. Borcherds, A. Ryba, ``Modular moonshine. II'', Duke Math. J. 33 (1996) 435–459.


\bibitem[BG]{BG}
{\sc K. Brown, Kenneth, I. Gordon}, {\it The ramification of centres: Lie algebras in positive characteristic and quantised enveloping algebras}. Math. Z. {\bf 238} (2001), no. 4, 733--779.

\bibitem[CM]{CM}
{\sc A. V. Chervov, Molev, A. I.}, On higher-order Sugawara operators. {\it Int. Math. Res. Not. IMRN} 2009, no. 9, 1612--1635.

\bibitem[DR]{DR}
C. Dong, L. Ren, 
``Representations of vertex operator algebras over an arbitrary field'', J. Algebra 403 (2014) 497--516.




\bibitem[FF]{FF} {\sc B. Feigin, E. Frenkel},
Affine Kac--Moody algebras at the critical level and Gelfand–Dikii algebras. 
{\it Int. Jour. Mod. Phys.} {\bf A7}, Supplement 1A (1992) 197--215.



 
 \bibitem[Fr]{Fr}
 {\sc E. Frenkel},
 Langlands correspondence for loop groups. Cambridge Studies in Advanced Mathematics, 103. Cambridge University Press, Cambridge, 2007.
 
 \bibitem[FB]{FB} {\sc E. Frenkel, D. Ben-Zvi}, ``Vertex Algebras and Algebraic Curves'', 2nd edn. Mathematical Surveys and Monographs, vol. 88 (American Mathematical Society, Providence, RI, 2004)
 
 
  \bibitem[FG]{FG} {\sc E. Frenkel, D. Gaitsgory}, ``Localization of $\mathfrak{g}$-modules on the affine Grassmannian'', 
Annals of Mathematics, 170 (2009), 1339--1381.
  
  
 \bibitem[GT]{GT}
{\sc S. M. Goodwin, L. Topley}, Restricted shifted Yangians and restricted finite W-algebras. Trans. Amer. Math. Soc. Ser. B 8 (2021), 190--228.

\bibitem[HJ]{HJ}
H. Huang, N. Jing,
``Lattice structure of modular vertex algebras'',
J. Algebra 592 (2022) 1--17.
 
\bibitem[Hu1]{HumLA} {\sc J. E. Humphreys}. Introduction to Lie algebras and Representation Theory.

\bibitem[Hu2]{Hum} {\sc J. E. Humphreys}. ``Linear algebraic groups''. Graduate Texts in Mathematics, No. 21. Springer-Verlag, New York-Heidelberg, 1975. xiv+247 pp.

\bibitem[Is]{Ish}
{\sc Ishii}, Jet schemes, arc spaces and the Nash problem. C. R. Math. Acad. Sci. Soc. R. Can. 29 (2007), no. 1, 1–21.

\bibitem[Ja1]{Jamntrl} 
{\sc J.~C.~Jantzen}, ``Representations of Lie algebras in prime characteristic'', in Representation
Theories and Algebraic Geometry, Proceedings (A. Broer, Ed.), pp.\ 185--235. Montreal,
NATO ASI Series, Vol. C 514, Kluwer, Dordrecht, 1998.

\bibitem[Ja2]{JanRAGs} {\sc J. C. Jantzen}, ``Representations of algebraic groups''. Second edition. Mathematical Surveys and Monographs, 107. American Mathematical Society, Providence, RI, 2003.

\bibitem[Ja04]{JaNO}
{\sc J.C.~Jantzen},
``Nilpotent orbits in representation theory'', in:
B.\,Orsted (ed.), ``Representation and Lie theory'', Progr. in
Math., {\bf 228}, 1--211, Birkh\"auser, Boston 2004.

\bibitem[JLM]{JLM}
{\sc Xi. Jiao, H.  Li, Q. Mu},
``Modular Virasoro vertex algebras and affine vertex algebras'',
J. Algebra {\bf 519} (2019), 273–311.

\bibitem[Ka1]{Ka70}
{\sc V. Kac},
The classification of the simple Lie algebras over a field with non-zero characteristic. (Russian)
Izv. Akad. Nauk SSSR Ser. Mat. 34 1970 385--408.

\bibitem[Ka2]{Ka}
{\sc V. Kac},
{Vertex algebras for beginners}. 
University Lecture Series, 10, 
Amer. Math. Soc., Providence, RI, 1996; 2nd ed., 1998

\bibitem[KW]{KW}
{\sc V. Kac, B. Weisfeiler}
 Coadjoint action of a semi-simple algebraic group and the center of the enveloping algebra in characteristic $p$. Proc. Kon. Nederl. Akad., Series A 38 (1976), 136—151.

\bibitem[KT]{KT}
M.Kashiwara, T.  Tanisaki, 
`Characters for irreducible modules with non-critical highest weights over affine Lie algebras, in: Wang, Jianpan (ed.) et al., Repre- sentations and quantizations. Proceedings of the international conference on representation theory, Shanghai, China, June 29–July 3, 1998. Beijing: China Higher Education Press (CHEP), 275--296 (2000). 

\bibitem[Ko]{Ko}
{\sc B.~Konstant},
\textit{Lie group representations on polynomial rings},
Bull. Amer. Math. Soc. \textbf{69}(4): 518--526 (1963).

\bibitem[LM]{LM}
H.-S. Li, Q. Mu, 
``Heisenberg VOAs over fields of prime characteristic and their representations','
Trans. Amer. Math. Soc. 370 (2) (2018) 1159--1184.

\bibitem[Lus]{Lus}
G. Lusztig, ``Intersection cohomology methods in representation theory'', in Proc. Internat.
Congr. Math. Kyoto 1990, I, ed. I. Satake (Springer, 1991), 155--174.



\bibitem[Mas]{Ma}
{\sc G.~Mason}, Vertex rings and their Pierce bundles.

\bibitem[Mi]{Mi}
{\sc J. S. Milne}, Algebraic groups. The theory of group schemes of finite type over a field. Cambridge Studies in Advanced Mathematics, 170. Cambridge University Press, Cambridge, 2017.

\bibitem[Mo1]{MoSS}
{\sc A. Molev}, On Segal-Sugawara vectors and Casimir elements for classical Lie algebras. Lett. Math. Phys. 111 (2021), no. 1, Paper No. 8, 23 pp.

\bibitem[Mo2]{Mo}
\bysame, ``Sugawara operators for classical Lie algebras''. Mathematical Surveys and Monographs, 229. American Mathematical Society, Providence, RI, 2018. xiv+304 pp.

\bibitem[MC]{MC}
{\sc A. Chervov, A. Molev,} On higher-order Sugawara operators. {\it Int. Math. Res. Not. IMRN} (2009), no. 9, 1612--1635.







\bibitem[Sk]{S}
{\sc S.~Skryabin,}
\textit{Invariants of Finite Group Schemes}, 
{\it J. London Math. Soc.} \textbf{2}(65): 339--360 (2002).

\bibitem[Sp]{Sp}
{\sc T. A. Springer,}
\textit{Linear Algebraic Groups}, 
{\it Birkhäuser Boston, MA} \textbf{2}: (1998).

\bibitem[SS]{SS}
{\sc T. A. Springer, R. Steinberg},
``Conjugacy classes''. 1970 Seminar on Algebraic Groups and Related Finite Groups (The Institute for Advanced Study, Princeton, N.J., 1968/69) pp. 167--266
Lecture Notes in Mathematics, Vol. 131 Springer, Berlin.


\bibitem[Ve]{V}
F.~Veldkamp,
\textit{The center of the universal enveloping algebra of a Lie algebra in characteristic},
Ann. Scient. Ec. Norm. Sup, \textbf{4}, 5, (1972) 217--240.

\bibitem[Ya]{Ya}
{\sc O. Yakimova}, Symmetrisation and the Feigin--Frenkel centre. {\it Compos. Math.} {\bf 158} (2022), no. 3, 585--622.

\bibitem[Za]{Za}
{\sc H.~Zassenhaus}, The representations of Lie algebras of prime characteristic.
Proc. Glasgow Math. Assoc. 2 (1954), 1--36. 



\end{thebibliography}

   \end{document}